\DeclareMathOperator{\AGL}{AGL}
\DeclareMathOperator{\GL}{GL}
\DeclareMathOperator{\Sym}{Sym}
\DeclareMathOperator{\F}{\mathbb{F}}
\DeclareMathOperator{\Alt}{Alt}
\def\FF{\F_2}
\newcommand\deq{\mathrel{\stackrel{\makebox[0pt]{\mbox{\normalfont\tiny def}}}{=}}}
\newcommand{\ve}{\varepsilon}
\newcommand{\Size}[1]{\left|\,#1\,\right|}
\newcommand{\Span}[1]{\left\langle\,#1\,\right\rangle}
\newcommand{\Set}[1]{\left\{\,#1\,\right\}}
\newcommand{\p}{\circ}
\newcommand{\isnorm}{\trianglelefteq}
\newcommand{\one}{1}
\let\phi\varphi
\theoremstyle{plain}
\newtheorem{theorem}{Theorem}
\newtheorem{lemma}[theorem]{Lemma}
\newtheorem{proposition}[theorem]{Proposition}
\newtheorem{corollary}[theorem]{Corollary}
\theoremstyle{remark}
\newtheorem{remark}[theorem]{Remark}
\theoremstyle{definition}
\newtheorem*{notation*}{Notation}
\begin{document}
\title{Regular subgroups with large intersection}
\author{R. Aragona} 
\address[Riccardo Aragona]%
{Dipartimento di Ingegneria e Scienze dell'Informazione e Ma\-te\-ma\-ti\-ca\\
Universit\`a degli Studi dell'Aquila\\
 Via Vetoio\\
 I-67100 Coppito (L'Aquila)\\
Italy}
\email{riccardo.aragona@univaq.it} 
\author{R. Civino} 
\address[Roberto Civino]%
{Dipartimento di Ingegneria e Scienze dell'Informazione e Ma\-te\-ma\-ti\-ca\\
Universit\`a degli Studi dell'Aquila\\
 Via Vetoio\\
 I-67100 Coppito (L'Aquila)\\
Italy}
\email{roberto.civino@univaq.it} 
\author{N. Gavioli} 
\address[Norberto Gavioli]%
{Dipartimento di Ingegneria e Scienze dell'Informazione e Ma\-te\-ma\-ti\-ca\\
Universit\`a degli Studi dell'Aquila\\
 Via Vetoio\\
 I-67100 Coppito (L'Aquila)\\
Italy}
\email{norberto.gavioli@univaq.it} 
\author{C. M. Scoppola}
\address[Carlo Maria Scoppola]%
{Dipartimento di Ingegneria e Scienze dell'Informazione e Ma\-te\-ma\-ti\-ca\\
Universit\`a degli Studi dell'Aquila\\
 Via Vetoio\\
 I-67100 Coppito (L'Aquila)\\
Italy}
\email{scoppola@univaq.it} 
\date{}
\thanks{R. Aragona, N. Gavioli, and C. M. Scoppola are members of INdAM-GNSAGA (Italy). R. Civino thankfully acknowledges support by the Department of Mathematics of the University of Trento. The authors thankfully acknowledge support by MIUR-Italy via PRIN 2015TW9LSR ``Group theory and applications''. Part of this work has been carried out during the cycle of seminars ``Gruppi al Centro'' organised at INdAM in Rome}
\subjclass[2010]{20B35, 20D20, 94A60}
\keywords{Elementary abelian regular subgroups, Sylow $2$-subgroups, affine groups, block ciphers, cryptanalysis.}

\maketitle
\begin{abstract}
 In this paper we study the relationships between the elementary abelian regular subgroups and the Sylow $2$-subgroups of their normalisers in the symmetric group $\Sym(\FF^n)$, in view of the interest that they have recently raised for their applications in symmetric cryptography.
\end{abstract}


\section{Introduction}
Let $n >2$ and let $(V,+)$ be an $n$-dimensional vector space over the field with two elements, where  $+$  denotes the bitwise XOR operation, i.e.\ the bitwise addition modulo two.
The conjugacy class of elementary abelian regular subgroups of the symmetric group $\Sym(V)$ has recently drawn the attention of symmetric cryptographers, as these subgroups and their normalisers may be used to detect weaknesses in symmetric-encryption methods, i.e.\ block ciphers. More specifically, cryptanalysts may take advantage of the alternative operations that these groups induce on the plaintext space and exploit them  to detect biases in the distribution of the ciphertexts. 

\medskip
In this paper, motivated by the possible cryptographic applications, we consider the families of  \emph{maximal-intersection subgroups} and \emph{second-maximal-intersection subgroups} of $\Sym(V)$, i.e.\ the families of elementary abelian regular subgroups of $\Sym(V)$ that intersect the image $\sigma_V$ of the right regular representation $\sigma$, here usually denoted by $T$, in a subgroup of index two or four in $T$. We prove that each second-maximal-intersection subgroup is affine. In other words, such a subgroup is contained in $\AGL(V)$, the normaliser of $T$ in $\Sym(V)$, which is a maximal subgroup of the alternating group $\Alt(V)$~\cite{praeger}. Moreover, we prove that every Sylow $2$-subgroup of $\AGL(V)$ contains one and only one second-maximal-intersection subgroup as a normal subgroup. As a consequence, we conclude that $[N_{\Sym(V)}(\Sigma):\Sigma]=2$, where $\Sigma$ is a Sylow $2$-subgroup of $\AGL(V)$.

\subsection*{Motivation and links to symmetric cryptography}
 A block cipher on the plaintext space $V$ is a family $\Set{E_k}_{k\in\mathcal K}$ of non-linear permutations of $\Sym(V)$, called \emph{encryption functions}, indexed by a set of parameters $\mathcal K$, called \emph{keys}. Each encryption function is usually obtained as the composition of different layers, each one designed with a precise cryptographic goal, depending on its role in the employed algorithm (see e.g.~\cite{aes,present,des}).
Some of those layers provide entropy to the encryption process by additions with \emph{round keys} in $V$ computed by a public procedure, called \emph{key schedule},  starting from the user-selected key in $\mathcal K$. The non-linearity of the functions $E_k$ is one of the crucial requirements to provide security against a large variety of statistical attacks, such as differential~\cite{bih91} and linear~\cite{mat93} cryptanalysis. 
For this reason, ways of making the cipher's components as far as possible from being linear are extensively studied~\cite{nyb93}. The usually considered notion of non-linearity is given with respect to the operation which is used in the cipher to perform the key addition. However
it is worth noticing here that the notion of non-linearity is not univocal. For example, one of the classical notions of non-linearity for $f \in \Sym(V)$ is the one that measures the distance of $f$ from the set of the affine functions $\AGL(V)$~\cite{carlet10}. Another well-established definition~\cite{nyb93} looks at the behaviour of the derivatives of $f$, measuring how far they are from being constant. Other notions of non-linearity may be found in~\cite{canteaut10, aragona17}. As already mentioned, the security of a cipher depends, among other things, on the requirement that its encryption functions do not behave as linear functions, i.e.\ they lie far from the set $\AGL(V)$. However, several isomorphic copies of $\AGL(V)$ are contained in $\Sym(V)$, and each of them corresponds to a different operation endowing $V$ with a distinct vector space structure. A target of a new branch of research in symmetric cryptography~\cite{cds06, Calderini2017, Brunetta2017, Civino2018} is to investigate the non-linearity of the encryption functions of a cipher with respect to these alternative operations.\\ 

Let us recall that $\sigma \,\colon V\to \Sym(V)$ denotes the right regular representation and that
\[T \,\deq \sigma_V=\Set{\sigma_v \mid v\in V,\,  x \mapsto x+v}.\] 
If  $\tau\,\colon v\mapsto \tau_v$ is another embedding of $V$ in $\Sym(V)$ as a regular permutation subgroup, then we denote by 
$\tau_V = \{\tau_v \mid v \in V\}$ its image, where the map ${\tau_v}$ is the one for which $0 \mapsto v$. A new operation $\circ$ on $V$ may be defined from $\tau_V$ by setting 
\[\forall u, v \in V \quad u\circ v \deq u\tau_v.\]
 It is straightforward to check that $(V,\circ)$ is an elementary abelian $2$-group. The notation $T_\p$ for $\tau_V$, used in~\cite{Calderini2017, Brunetta2017, Civino2018},  might be ambiguous in this paper. For this reason, we prefer to use the more explicit notation $\tau_V = T^g$, where the element $g \in \Sym(V)$ conjugates $T$ in $\tau_V$. The existence of such an element is a consequence of a result by Dixon~\cite{Dixon1971}, that we recall in Section~\ref{sec_pre}.\\
 
In what follows we quickly describe the contributions of~\cite{cds06, Calderini2017, Civino2018,Brunetta2017} in order to give an idea of the possible attacks.\\
Abelian regular  subgroups of the affine group $\AGL(V)$  are described in~\cite{cds06} in terms of commutative associative algebra structures  defined on $V$. This is also the case for $T^g$. In~\cite{Calderini2017,Brunetta2017} the authors designed a toy cipher whose set of encryption functions is contained in a conjugate  $\AGL(V)^g$ for some $g\in \Sym(V)$. In other words, the encryption functions are affine with respect to the new operation, different from the classical bitwise XOR, defined as above from  $T^g$. 
The first differential attack~\cite{bih91} using an alternative operation has been performed in~\cite{Civino2018}, where the authors designed a cipher which is resistant to the classical differential attack with respect to $+$ but may be attacked using another operation specifically created.

\noindent In the works~\cite{Calderini2017, Civino2018, Brunetta2017} the subspace 
\[
W_\circ \deq \Set{k \in V \mid \forall x \in V \quad k\p x = k+x}
\]
plays an important cryptographic role, since it represents the set of round keys for which the XOR addition and the $\p$-addition give the same result. For this reason they are called \emph{weak keys} and $W_\p$ is called the \emph{weak-key subspace}. The same notation is also used in this paper. 
 It is straightforward to check that $W_\p$ is a subspace of both $(V,+)$ and $(V,\p)$ and $\sigma_{W_\circ}=\tau_{W_\circ}=T \cap T_\p$.\\

Finally, it is worth mentioning that the cryptanalysis exploiting an operation  different from the one used to perform the key addition may be a hard task, since usually the transformations which are affine  with respect to the new operation  may not be affine with respect to the classical one. For this reason, some modern-cipher  designers decided to alternate in their algorithms  several  layers, each of which is affine with respect to a different operation. A classic example of this design strategy is the Russian government standard GOST~\cite{dolmatov2010gost}, where an addition modulo $2^{32}$ is used besides the classical XOR for the key addition. Such a design strategy makes the use of standard cryptanalytic techniques more difficult. For this reason, in the case of GOST, only few results are known regarding the cryptanalysis, see e.g.~\cite{seki2000differential,aragona2017group}.
\subsection*{Organisation of the paper}
In Sec.~\ref{sec_pre} we introduce our notation and we recall some known preliminary results. Sec.~\ref{sec:regular} is dedicated to the study of the
maximal-intersection subgroups and second-maximal-intersection subgroups of $\Sym(V)$.
In our main results (Theorems~\ref{thm_MaxSub} and~\ref{thm_TinAGL}) 
we parametrise such groups. In Sec.~\ref{sec_syl} we focus our attention on the case of second-maximal-intersection subgroups and on the Sylow $2$-subgroups of $\AGL(V)$. We show that every Sylow $2$-subgroup of $\AGL(V)$ contains one and only one second-maximal-intersection subgroup as a normal subgroup (Theorem~\ref{prop_Syl}). As a consequence, we show that every Sylow $2$-subgroup of $\AGL(V)$ is self-normalising in $\AGL(V)$ and has index $2$ in its normaliser in $\Sym(V)$ (Theorem~\ref{thm:N=S}). Lastly, Sec.~\ref{sec_concl} concludes the paper with some open problems.


\section{Notation and preliminary results}\label{sec_pre}
 We have already introduced part of our (rather standard) notation. Moreover, the set $\Set{e_1,e_2,\ldots, e_n}$ denotes the canonical basis of $V$. 
For each given vector $v \in V$ we denote by $v^{(i)} \in \FF$ the $i$-th coordinate of $v$, and by $v^{(i:j)} \in \FF^{j-i+1}$ the vector composed by the coordinates of $v$ from the $i$-th to the $j$-th, for $1\leq i<j \leq n$.
If $G$ is a group acting on $V$ we denote by $vg$ the image of the action of $g \in G$ on $v \in V$.  The identity element of $G$ is denoted by $\one_G$. 
We also recall that the affine group $\AGL(V)$ is $T \rtimes \GL(V)$, where $\GL(V)$ is the group of linear bijections of $V$. The identity matrix of $\GL\left(\FF^{\,d}\right)$ is also denoted by $\one_d$.  \\

Let  now $\tau$ be a regular representation as in the previous section.
In this paper we will extensively use the fact that, by ~\cite[proof of Lemma 1]{Dixon1971}, there exists $g\in \Sym(V)$ such that  $\tau_V=T^g$. Denoting by $\tau_v$ the unique map of $\tau_V$ sending $0$  to $v$,
one has $(\sigma_{v})^{g}=\tau_{(0g^{-1}+v)g}$ for all $v\in V$. For the convenience of the reader the proof of Dixon's result is reproduced here. 
\begin{theorem}
Let $X = \Set{x_1,x_2,\ldots, x_m}$ be a finite set and let $H$ and $K$ be regular subgroups of $\Sym(X)$. If $H \cong K$, then there exists $g \in \Sym(X)$ such that $K = H^g$.
\end{theorem}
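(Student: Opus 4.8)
The plan is to construct the conjugating element $g$ directly from an abstract isomorphism $\phi\colon H\to K$, exploiting the fact that regularity ``coordinatises'' $X$ by each of the two groups. First I would fix a base point $x_0\in X$. Since $H$ acts regularly, the orbit map $\alpha_H\colon h\mapsto x_0 h$ is a bijection $H\to X$; likewise $\alpha_K\colon k\mapsto x_0 k$ is a bijection $K\to X$. In particular $\Size{H}=\Size{K}=\Size{X}=m$, and every $x\in X$ has a unique expression $x=x_0 h$ with $h\in H$.

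Next, using an isomorphism $\phi\colon H\to K$ (which exists by hypothesis; in the intended application it is automatic, since $T$ and $\tau_V$ are both elementary abelian $2$-groups of order $m$), I would define $g\colon X\to X$ by the rule $(x_0 h)g\deq x_0\,\phi(h)$ for every $h\in H$. This is well defined precisely because each point has a unique representation $x_0 h$, and it is a bijection because it factors as $\alpha_H^{-1}$ followed by $\phi$ followed by $\alpha_K$, a composite of three bijections. Hence $g\in\Sym(X)$.

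The heart of the argument is then to verify that conjugation by $g$ realises $\phi$, namely that $h^g=\phi(h)$ for all $h\in H$; equivalently, that $x\,hg=xg\,\phi(h)$ for all $x\in X$. Writing an arbitrary point as $x=x_0 h'$, the left-hand side becomes $(x_0(h'h))g=x_0\,\phi(h'h)$ by the defining rule for $g$, while the right-hand side becomes $(x_0\,\phi(h'))\,\phi(h)=x_0\,\big(\phi(h')\phi(h)\big)$; these agree because $\phi$ is a homomorphism. This yields $h^g=\phi(h)$, and since $\phi$ maps $H$ onto $K$, we conclude $H^g=\phi(H)=K$, as required. The main point demanding care is the bookkeeping with the right-action convention: one must track the order of composition so that the homomorphism property of $\phi$ lines up with the left-to-right application of permutations, since an inconsistency there would produce an anti-homomorphism rather than $\phi$ itself. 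Well-definedness and bijectivity of $g$, by contrast, are forced immediately by the regularity of $H$ and of $K$.
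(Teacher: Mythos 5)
Your proof is correct and is essentially the same as the paper's: both define $g$ by sending $x_0h\mapsto x_0\,\phi(h)$ using regularity of $H$ and $K$, and then verify $h^g=\phi(h)$ via the homomorphism property, concluding $H^g=\phi(H)=K$. The only cosmetic difference is that you check $hg=g\,\phi(h)$ pointwise instead of computing $x_ig^{-1}hg$ directly.
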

\begin{proof}
Let $\zeta: H \to K$ be an isomorphism. Since both the groups are regular,
we have that $\Set{x_1h \mid h \in H} = \Set{x_1k \mid  k \in K} = X$. Let us now define the permutation $g\in \Sym(X)$ by setting $(x_1 h)g\deq x_1(h\zeta)$, for each $h \in H$. The result follows from the fact that $h^g=h\zeta$ for all $h \in H$. Indeed, let $1 \leq i \leq m$ and let $h'\zeta \in K$ such that $x_1h'\zeta=x_i$, where $h' \in H$. Then, since $\zeta$ is an isomorphism we have
\[
x_i h^g=x_ig^{-1}hg=(x_1h'\zeta) g^{-1}hg=(x_1h'h)g = x_1(h' h \zeta) =x_1h'\zeta h \zeta  = x_i h\zeta.
\]
Since the property holds for each $1 \leq i \leq m$, then for each $h \in H$ we have $h^g = h\zeta$, therefore $H^g = H\zeta = K$.
\end{proof}
The following lemma generalises a well know fact.
\begin{lemma}\label{lem_Tnorm}
Every elementary abelian regular subgroup of $\Sym(V)$ is the unique non-trivial proper normal subgroup of its own normaliser.  
\end{lemma}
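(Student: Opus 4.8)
The plan is to reduce to the right regular representation and then inspect the normal subgroups of the affine group. By Dixon's result recalled above, every elementary abelian regular subgroup of $\Sym(V)$ is conjugate to $T = \sigma_V$. Since $N_{\Sym(V)}(T^g) = N_{\Sym(V)}(T)^g$ and conjugation by $g$ is an isomorphism carrying the normal subgroups of $N_{\Sym(V)}(T)$ bijectively onto those of $N_{\Sym(V)}(T)^g$, preserving triviality and properness, it suffices to establish the claim for $T$ itself. For $T = \sigma_V$ one has $N \deq N_{\Sym(V)}(T) = \AGL(V) = T \rtimes \GL(V)$, so the statement to prove becomes: $T$ is the unique non-trivial proper normal subgroup of $\AGL(V)$.

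I would then fix an arbitrary non-trivial $M \isnorm N$ and force $M = T$, the only competing possibility being the improper $M = N$. Three structural facts, all available because $n > 2$, drive the argument: (i) $C_{\Sym(V)}(T) = T$, since the centraliser of an abelian regular subgroup coincides with the subgroup itself; (ii) the conjugation action of $N$ on $T$ is trivial on $T$ and realises the natural action of $\GL(V)$ on $T \cong \FF^n$, which is irreducible, so that the only $N$-invariant subgroups of $T$ are $\one$ and $T$; and (iii) $N/T \cong \GL(n,2) = \PSL(n,2)$ is simple for $n \geq 3$.

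The argument splits on $M \cap T$. If $M \cap T = \one$, then $[M,T] \leq M \cap T = \one$, both subgroups being normal in $N$, so $M \leq C_N(T) = T$ by (i), whence $M = M \cap T = \one$, contradicting non-triviality. If $M \cap T \neq \one$, then $M \cap T$ is a non-trivial $N$-invariant subgroup of $T$, being an intersection of normal subgroups, so (ii) gives $M \cap T = T$, that is $T \leq M$; then $M/T \isnorm N/T$ and (iii) forces $M/T = \one$ or $M/T = N/T$, i.e.\ $M = T$ or $M = N$. Discarding $M = N$ yields $M = T$, which is the uniqueness; existence is immediate since $T \isnorm N$ by definition of the normaliser.

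I expect the genuinely substantive point to be the identification underlying the reduction, namely that the normaliser of the standard $T$ is exactly $\AGL(V)$ and that the induced conjugation action on $T$ is precisely the natural, hence irreducible, $\GL(n,2)$-module. Once these are in place, together with the simplicity of $\GL(n,2)$ for $n \geq 3$, which is exactly where the hypothesis $n > 2$ is used ($\GL(2,2) \cong S_3$ being soluble, hence not simple), the two-case analysis on $M \cap T$ is routine.
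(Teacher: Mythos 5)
Your proof is correct and follows essentially the same route as the paper's: reduce to $T$ via Dixon's conjugacy result, use the self-centralising property of $T$ together with the commutator argument $[M,T]\leq M\cap T$ to handle the trivial-intersection case, and invoke the simplicity of $\GL(V)$ for $n>2$ to finish. The only cosmetic difference is that the paper packages the intersection analysis as ``$T$ is the unique minimal normal subgroup of $\AGL(V)$,'' whereas you phrase the same fact as irreducibility of the natural $\GL(V)$-module; these are equivalent.
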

\begin{proof}
By~\cite{Dixon1971}, the result may be proven for the regular group $T$, up to conjugation.
Since $T$ is well known to be a self-centralising minimal normal subgroup of $\AGL(V)$, if $\bar{T} \neq T$ is a minimal normal subgroup of $\AGL(V)$, then $T\cap \bar{T}=\Set{\one_{\Sym(V)}}$. So that $\bar{T}$ centralises $T$, from which it follows that $\bar{T} < T$, a contradiction. 
Now, let $N$ be a non-trivial normal subgroup of $\AGL(V)$. By the previous argument, $T \leq N$ and $N / T \isnorm \AGL(V) / T \cong \GL(V)$, which is simple since $\dim(V) > 2$ and the ground field has characteristic 2. Therefore $N = \AGL(V)$.
\end{proof}

The following remark will be useful to describe the centraliser over $\Sym(V)$ of a subgroup of $T$. 
\begin{remark}\label{rem:cent-wr}
Let $H$ be a group acting transitively on a set $X$ and let $Z\deq X\times Y$, where $Y$ is a set.  There exists a canonical embedding $\theta:H\to\Sym(Z)$ under which the action of $H$ is defined by  $(x,y)h^\theta=(xh,y)$. Besides, there exists another embedding 
\[
\Sym(X)^Y \hookrightarrow \Sym(Z)
\]
defined by $(x,y)f=(xf_y,y)$, where $f\in\Sym(X)^Y$ is the function sending $y\mapsto f_y\in\Sym(X)$. In particular $C_{\Sym(X)}(H)^Y$ is a subgroup of $\Sym(Z)$ which centralises $H^\theta$.
Notice that also $\Sym(Y)$ embeds in $\Sym(Z)$ by way of $(x,y)\pi=(x,y\pi)$, for $\pi\in\Sym(Y)$. This group centralises $H^\theta$, and normalises and intersects trivially $C_{\Sym(X)}(H)^Y$. As a consequence we have
\[
C_{\Sym(X)}(H)^Y\rtimes \Sym(Y) = C_{\Sym(X)}(H)\wr_{Y} \Sym(Y)\leq C_{\Sym(Z)}(H^{\theta}).
\]
It is straightforward, but somewhat lengthy, to show that the opposite inclusion holds, i.e.
\begin{equation}\label{eq:wr}
C_{\Sym(Z)}(H^{\theta})=C_{\Sym(X)}(H)\wr_{Y} \Sym(Y).
\end{equation} 
\end{remark}
As a consequence of the previous remark we can prove the following.
\begin{lemma}\label{lem_Cwreath}
Let $G$ be a finite group and $\sigma,\lambda: G \to \Sym(G)$ respectively be the  right and left regular representations of $G$. If $H\leq G$, then 
\[
C_{\Sym(G)}(H^\sigma)=H^\lambda \wr \Sym(G/H),
\]
where $G/H$ is the set of left cosets of $H$ in $G$.
\end{lemma}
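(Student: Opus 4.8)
The plan is to realise $H^\sigma$ as a permutation group of exactly the shape described in Remark~\ref{rem:cent-wr} and then to invoke~\eqref{eq:wr}. The starting observation is that the orbits of $H^\sigma$ on $G$ are precisely the left cosets of $H$: indeed $\sigma_h\colon x \mapsto xh$, so the orbit of $x$ is $xH$, and the set of orbits is $G/H$. On each such orbit $H^\sigma$ acts regularly, and in particular transitively.

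First I would fix a transversal $\Set{t_y \mid y \in G/H}$ for the left cosets of $H$, so that the map $(h,y)\mapsto t_y h$ is a bijection from $H \times (G/H)$ onto $G$. I use it to identify $G$ with the product $X \times Y$ of Remark~\ref{rem:cent-wr}, where $X = H$ carries the right-regular action of $H$ on itself and $Y = G/H$. Under this identification one checks directly that $\sigma_{h}$ becomes the map $(h',y)\mapsto (h'h,\, y)$, since $t_y h' h$ again lies in the coset $t_y H$; that is, $\sigma_h$ acts only on the first coordinate by right multiplication and fixes the second. This is exactly the embedding $\theta$ of Remark~\ref{rem:cent-wr} associated with the transitive action of $H$ on $X = H$, so $H^\sigma$ is identified with $H^\theta$.

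With this in place, equation~\eqref{eq:wr} gives at once
\[
C_{\Sym(G)}(H^\sigma) = C_{\Sym(H)}(H) \wr_{G/H} \Sym(G/H),
\]
where $C_{\Sym(H)}(H)$ denotes the centraliser in $\Sym(H)$ of the right-regular action of $H$ on itself. It then remains to identify this centraliser with $H^\lambda$. This is the classical computation of the centraliser of a regular representation, and it is in fact the present lemma in the special case $H = G$, for which $G/H$ is a single point and the wreath product collapses. I would prove it directly: if $f$ centralises the right-regular action and $a \deq \one_G f$, then for every $x \in H$ one has $xf = (\one_G\sigma_x)f = (\one_G f)\sigma_x = ax$, whence $f = \lambda_{a^{-1}} \in H^\lambda$; conversely every $\lambda_h$ commutes with every $\sigma_g$ because $x\sigma_g\lambda_h = h^{-1}xg = x\lambda_h\sigma_g$. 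Substituting $C_{\Sym(H)}(H) = H^\lambda$ into the displayed equation yields the claim.

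The only real obstacle is the bookkeeping: making the identification $G \cong H \times (G/H)$ fully explicit through the chosen transversal, and verifying that the right-regular action of $H^\sigma$ respects this product decomposition so that Remark~\ref{rem:cent-wr} applies verbatim. Once the action is seen to be confined to the first coordinate, both the wreath-product description and the regular-representation centraliser fact are routine.
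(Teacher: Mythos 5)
Your proof is correct and takes essentially the same route as the paper: identify $G$ with $H\times(G/H)$ so that $H^\sigma$ becomes the embedding $\theta$ of Remark~\ref{rem:cent-wr}, apply Eq.~\eqref{eq:wr}, and combine it with the classical fact $C_{\Sym(H)}(H^\sigma)=H^\lambda$. The only difference is one of detail, not of method: you spell out the coset-transversal identification and prove the regular-representation centraliser fact, which the paper simply invokes as well known.
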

\begin{proof}
It is well known that $C_{\Sym(H)}(H^\sigma)=H^\lambda$, therefore the claim follows from Eq.~\eqref{eq:wr}.
\end{proof}

\begin{corollary}\label{cor_Cwreath}
If $M$ is a subgroup of $T$ of order $2^{n-m}$, then $C_{\Sym(V)}(M)$ is the wreath product $M\wr \Sym(2^m)$. In particular $\Size{C_{\Sym(V)}(M)} = 2^m!\,2^{2^{m}(n-m)}$. 
\end{corollary}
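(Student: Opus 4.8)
The plan is to read this off from Lemma~\ref{lem_Cwreath} by taking the ambient group to be $(V,+)$ itself. First I would note that a subgroup $M\leq T$ of order $2^{n-m}$ is precisely $M=W^\sigma$ for a unique subspace $W\leq(V,+)$ with $\Size{W}=2^{n-m}$, since $\sigma$ is an isomorphism of $(V,+)$ onto $T$ and therefore matches up subgroups with subgroups. Instantiating Lemma~\ref{lem_Cwreath} with $G=(V,+)$ and $H=W$ then yields
\[
C_{\Sym(V)}(W^\sigma)=W^\lambda\wr\Sym(V/W),
\]
where $V/W$ denotes the set of cosets of $W$ in $V$.

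The only point requiring attention is the base group of this wreath product. Because $(V,+)$ is abelian, the images of its right and left regular representations coincide on every subgroup, so $W^\lambda=W^\sigma=M$; indeed, since $V$ has exponent two one even has $\lambda_v=\sigma_v$ for every $v$. Thus the base of the wreath product is $M$ itself, regarded inside $\Sym(V)$. To recover the indexing group $\Sym(2^m)$ I would count cosets: from $\Size{V}=2^n$ and $\Size{W}=2^{n-m}$ we get $[V:W]=2^m$, whence $\Sym(V/W)\cong\Sym(2^m)$. Combining these identifications gives $C_{\Sym(V)}(M)=M\wr\Sym(2^m)$, as asserted.

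For the order I would simply count in $M\wr\Sym(2^m)=M^{2^m}\rtimes\Sym(2^m)$, whose base group is a direct product of $2^m$ copies of $M$:
\[
\Size{M\wr\Sym(2^m)}=\Size{M}^{2^m}\cdot\Size{\Sym(2^m)}=\left(2^{n-m}\right)^{2^m}\cdot 2^m!=2^m!\,2^{2^m(n-m)}.
\]

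I do not expect a genuine obstacle here, since the corollary is just the specialisation of Lemma~\ref{lem_Cwreath} to the abelian group $(V,+)$. The two things to keep honest are the abelianness, which collapses $\lambda$ onto $\sigma$ so that the base of the wreath product is literally $M$ rather than some distinct left-translate copy, and the elementary index computation $[V:W]=2^m$ that fixes the size of the top group.
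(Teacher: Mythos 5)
Your proposal is correct and matches the paper's own proof: both specialise Lemma~\ref{lem_Cwreath} to the abelian group $(V,+)$ (equivalently $T$) and use commutativity to identify the left- and right-regular copies, so the base group is $M$ itself; the paper's proof is just a one-line version of exactly this argument. Your explicit identification $M=\sigma_W$, the coset count $[V:W]=2^m$, and the order computation are details the paper leaves implicit, but they introduce nothing beyond the same route.
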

\begin{proof}
The result follows from Lemma \ref{lem_Cwreath}. Indeed, since $T$ is abelian, $M^\sigma=M^\lambda=M$.
\end{proof}


\section{Elementary abelian regular subgroups whose intersection with the translation group is large}\label{sec:regular}
In this section we prove some results on elementary abelian regular subgroups of $\Sym(V)$, and more generally on fixed-point-free involutions. 
Our interest is in particular in the connections between such groups and $\AGL(V)$. \\

We now parametrise the elementary abelian regular subgroups of $\Sym(V)$ according to the size of their intersection with $T$.
\subsection{Maximal intersection}
Here we prove that none of the aforementioned groups has a maximal intersection with $T$. The result is a consequence of the following proposition, which slightly generalises a result appearing in~\cite{Calderini2017}.
\begin{proposition}\label{six}
Let $g \in \Sym(V)$ such that $T \neq T^g$. If $W\leq V$ such that $\sigma_W =T \cap T^g$, then $\dim(W) \leq n-2$.
\end{proposition}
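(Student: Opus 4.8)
The plan is to rule out only the two largest possibilities for $\dim(W)$, namely $\dim(W)=n$ and $\dim(W)=n-1$, the former being immediate and the latter carrying all the content. If $\dim(W)=n$ then $W=V$ and $M\deq T\cap T^g=\sigma_W=\sigma_V=T$, whence $T\leq T^g$; comparing orders gives $T=T^g$, against $T\neq T^g$. So it suffices to exclude $\dim(W)=n-1$, i.e.\ $\Size{M}=2^{n-1}$.

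First I would place everything inside the centraliser $C\deq C_{\Sym(V)}(M)$. Since $T$ and $T^g$ are abelian and both contain $M$, each of them centralises $M$, so $T,T^g\leq C$; and as $M$ is abelian and is centralised by all of $C$, we have $M\leq Z(C)$. The essential input is Corollary~\ref{cor_Cwreath}: with $\Size{M}=2^{n-1}$ (so $m=1$) it gives $C=M\wr\Sym(2)$, of order $2^{2n-1}$, whose base group $B\deq M\times M$ acts coordinatewise on the two cosets $V_0\deq W$ and $V_1\deq W+t$ of $W$ in $V$, while the top factor interchanges them. Under this identification $M$ becomes the diagonal $\Set{(w,w)\mid w\in W}$ (translations by $W$ act in the same way on both cosets), and the involution $\sigma_t\in T\leq C$ is a convenient generator of a complement to $B$, swapping $V_0$ and $V_1$.

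The heart of the proof is then a rigidity statement: \emph{the only elementary abelian subgroup of $C$ of order $2^n$ that contains $M$ and is transitive on $V$ is $T$ itself.} Let $E$ be such a subgroup; as $[E:M]=2$, write $E=M\sqcup Mc$ with $c\in E\setminus M$. If $c\in B$ then $E\leq B$ preserves each of $V_0,V_1$ and cannot be transitive, a contradiction; hence $c$ lies in the non-trivial coset of $B$, say $c=(a,b)\sigma_t$. Since conjugation by $\sigma_t$ swaps the two coordinates of $B$, a direct computation gives $c^2=(a+b,a+b)\in M$, and elementary abelianness forces $c^2=\one_{\Sym(V)}$, i.e.\ $a=b$. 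Then $c\in M\sigma_t$, so $E=\langle M,\sigma_t\rangle=\sigma_V=T$, because $\langle W,t\rangle=V$. Applying this with $E=T^g$, which is elementary abelian, regular, and contains $M$, yields $T^g=T$, the desired contradiction.

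The step I expect to be the main obstacle is precisely this rigidity computation inside $M\wr\Sym(2)$: one has to track the swap action of $\sigma_t$ on the base group with the correct (right-action) conventions and verify that the involution condition on a coset-swapping element $c$ collapses $E$ onto $T$. Once this is in place, the reduction to $\dim(W)=n-1$, the two centralisation inclusions, and the transitivity obstruction for subgroups of $B$ are all routine, and the bound $\dim(W)\leq n-2$ follows.
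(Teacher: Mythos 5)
Your proof is correct, but it takes a genuinely different route from the paper's. The paper argues directly and very economically: assuming $\dim(W)=n-1$, it picks $v\in V\setminus W$ and shows that the unique map $\tau_v\in T^g$ sending $0\mapsto v$ agrees pointwise with $\sigma_v$ --- on $W$ because $\tau_v$ commutes with $\sigma_W=T\cap T^g$ (so $w\tau_v=w+v$ for $w\in W$), and on the coset $W+v$ because $\tau_v$ is an involution (so $(w+v)\tau_v=w(\tau_v)^2=w$); hence $T^g=\Span{\sigma_W,\tau_v}=\Span{\sigma_W,\sigma_v}=T$, a contradiction. You reach the same contradiction by first invoking Corollary~\ref{cor_Cwreath} to identify $C_{\Sym(V)}(M)$ with $M\wr\Sym(2)$ and then proving a rigidity statement inside that wreath product; your computation $c^2=(a+b,a+b)$ for a coset-swapping element $c=(a,b)\sigma_t$ is precisely the wreath-coordinate version of the paper's involution step, and the coordinate-swapping action of $\sigma_t$ encodes the same commutation with $M$. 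So the two arguments hinge on identical algebraic facts, but yours routes them through the structure of the centraliser. What the paper's approach buys is brevity and self-containedness: it needs no appeal to the wreath-product machinery, whose key ingredient, Eq.~\eqref{eq:wr}, the paper itself describes as straightforward but somewhat lengthy to verify. What yours buys is a slightly stronger and more structural statement --- $T$ is the \emph{unique} transitive elementary abelian subgroup of order $2^n$ of $C_{\Sym(V)}(M)$ containing $M$ --- which is closer in spirit to the classification arguments the paper develops later (Theorems~\ref{thm_MaxSub} and~\ref{thm_314_1}), and your explicit treatment of the trivial case $\dim(W)=n$ fills in a step the paper leaves implicit.
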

\begin{proof}
Assume by way of contradiction that $\dim(W)=n-1$.
Let $\Set{v_i}_{i=1}^{n-1}$ be a basis for $W$ and $v \in V\setminus W$. The claim holds if $a\tau_{v} = a \sigma_{v}$ for any $a \in V$. If $a \in W$ there is nothing to prove, hence without loss of generality we may assume $a = w+v$, for some $w \in W$. Then
\begin{align*}
a\tau_{v}&=(w+v)\tau_{v} =   (w\sigma_{v})\tau_{v} \\
 &= (w\tau_{v})\tau_{v} =  w (\tau_{v})^2 = w\\\
 &= a \sigma_{v}. 
 \qedhere
\end{align*}
\end{proof}

From Proposition~\ref{six}, we can derive the following result.
\begin{theorem}\label{thm_MaxSub}
Let $M$ be any maximal subgroup of $T$. Then $C_{\Sym(V)}(M)<\AGL(V)$. Moreover  $\Size{C_{\Sym(V)}(M)}= 2^{2n-1}$.
\end{theorem}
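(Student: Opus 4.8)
The plan is to prove the two assertions in the reverse of the stated order: I would first establish the equality $\Size{C_{\Sym(V)}(M)}=2^{2n-1}$ and then use it as a counting device to force the inclusion.

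Since $M$ is a maximal subgroup of the elementary abelian group $T\cong\FF^{\,n}$, it has order $2^{n-1}$, and hence $M=\sigma_W$ for some hyperplane $W\leq V$. Applying Corollary~\ref{cor_Cwreath} with $m=1$ (because $\Size{M}=2^{n-1}=2^{n-m}$) gives at once that $C_{\Sym(V)}(M)=M\wr\Sym(2)$ and
\[
\Size{C_{\Sym(V)}(M)}=2!\cdot 2^{2(n-1)}=2\cdot 2^{2n-2}=2^{2n-1},
\]
which settles the \emph{Moreover} part and, crucially, pins down the exact order of the centraliser.

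For the inclusion the idea is to count how many \emph{affine} permutations centralise $M$ and to observe that this number already matches $\Size{C_{\Sym(V)}(M)}$. Writing a generic element of $\AGL(V)$ as $\alpha\colon x\mapsto xA+t$ with $A\in\GL(V)$ and $t\in V$, and using the right-action convention, one has $x(\alpha\sigma_w)=xA+t+w$ and $x(\sigma_w\alpha)=xA+wA+t$; thus $\alpha$ commutes with $\sigma_w$ for every $w\in W$ exactly when $wA=w$ for all $w\in W$, i.e.\ when $A$ fixes $W$ pointwise. I would then count such pairs $(A,t)$: the translation part $t$ is free, giving $2^{n}$ choices, while an invertible $A$ fixing the hyperplane $W$ pointwise is determined by the image $vA$ of a fixed $v\in V\setminus W$, and invertibility forces $vA\in V\setminus W$, giving exactly $2^{n-1}$ choices. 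Hence there are precisely $2^{n-1}\cdot 2^{n}=2^{2n-1}$ affine permutations centralising $M$.

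Comparing the two counts, these $2^{2n-1}$ affine maps already exhaust $C_{\Sym(V)}(M)$, so every centralising permutation is affine, that is $C_{\Sym(V)}(M)\leq\AGL(V)$; the inclusion is strict because $\Size{\AGL(V)}=2^{n}\Size{\GL(V)}>2^{2n-1}$ for $n>2$. The only delicate point is the reduction of the commutation relation to the pointwise-fixing condition on $W$ together with the attendant enumeration of the admissible invertible $A$; once that is in place, everything is forced by the order formula of Corollary~\ref{cor_Cwreath}. A more hands-on alternative would instead exhibit explicit generators of $M\wr\Sym(2)$—translations acting coset-by-coset on the two $W$-cosets, together with the swap $\sigma_v$ for $v\in V\setminus W$—and verify each is affine by absorbing the differing coset-translations into a transvection-type linear map $x\mapsto x+\phi(x)d$, where $\phi$ is the linear functional with kernel $W$; but the counting argument is cleaner and sidesteps this case analysis.
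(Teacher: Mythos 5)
Your proof is correct, but it reaches the inclusion $C_{\Sym(V)}(M)<\AGL(V)$ by a genuinely different route. The paper argues structurally: for $c \in C_{\Sym(V)}(M)$ and a translation $\xi \in T\setminus M$, if $\eta=c^{-1}\xi c \notin T$ then $\Span{M,\eta}=\Span{M,\xi}^c=T^c$ would be an elementary abelian regular subgroup meeting $T$ in a subgroup of index two, contradicting Proposition~\ref{six}; hence $C_{\Sym(V)}(M)$ normalises $T$ and so lies in $N_{\Sym(V)}(T)=\AGL(V)$, with Corollary~\ref{cor_Cwreath} invoked only afterwards for the order. You reverse the logical order: you take the exact value $2^{2n-1}$ from Corollary~\ref{cor_Cwreath} first, then count $C_{\AGL(V)}(M)$ directly (the commutation relation forces the linear part $A$ to fix the hyperplane $W$ pointwise, invertibility forces $vA\in V\setminus W$, giving $2^{n-1}\cdot 2^{n}$ affine centralising maps), and conclude by cardinality that $C_{\AGL(V)}(M)=C_{\Sym(V)}(M)$; your linear-algebra count and the strictness comparison $2^{2n-1}<\Size{\AGL(V)}$ are both accurate. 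Each approach buys something: the paper's proof exposes the obstruction of Proposition~\ref{six} (no conjugate of $T$ can meet $T$ in a maximal subgroup), a fact reused later in the paper (e.g.\ in the proof of Theorem~\ref{thm_314}), and its inclusion argument is independent of the wreath-product equality~\eqref{eq:wr}; your proof avoids Proposition~\ref{six} entirely, is purely a counting argument, and yields as a by-product an explicit description of $C_{\Sym(V)}(M)$ as the group of maps $x\mapsto xA+t$ with $A$ fixing $W$ pointwise — but note that it leans on the exact equality in Corollary~\ref{cor_Cwreath} (hence on Eq.~\eqref{eq:wr}, which the paper concedes is ``somewhat lengthy'' to verify) for both halves of the statement, not just the order.
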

\begin{proof}
It is enough to show that $T \isnorm C_{\Sym(V)}(M)$, so that $C_{\Sym(V)}(M)$ is a subgroup of the normaliser of $T$ which is $\AGL(V)$. To this purpose, let $c \in C_{\Sym(V)}(M)$ and let $\xi$ be any translation in $T \setminus M$. The aim is to show that  $\eta \deq c^{-1}\xi c\in T$. 
Assume the contrary by way of contradiction, so that $\eta \notin T$.  Clearly  $\Span{ M, \eta } = \Span{ M, \xi }^c=T^c\cong T$ and $\Size{\left(T \cap \Span{ M, \eta }\right)}= 2^{n-1}$. This contradicts the previous proposition.
The claim $\Size{C_{\Sym(V)}(M)}= 2^{2n-1}$ follows from Corollary~\ref{cor_Cwreath}.
\end{proof}

\subsection{Second-maximal intersection}
In this section we prove that elementary abelian regular subgroups that intersect $T$ in a second-maximal subgroup are all contained in $\AGL(V)$.\\

From now on we shall assume that $g \in \Sym(V)$ is such that $\sigma_W =T \cap T^g$ and $\dim(W) = n-2$, where $W\leq V$. Let $W_1 \deq W+v_1$, $W_2 \deq W+v_2$ and $W_{1,2} \deq W+v_1+v_2$, for some $v_1$ and $v_2$ in $V$ which are linearly independent modulo $W$, be the non-trivial cosets of $W$ in $V$. Notice that any element  in $\Sym(V)$ centralising $T \cap T^g$ permutes these cosets. \\

In the hypothesis of this section, Theorem~\ref{thm_MaxSub} has not a counterpart. However, we have the following generalisation.
\begin{lemma}\label{lem_cosets}
Let $W \leq V$ be such that $\dim(W) = n-2$.
Let $\phi \in \Sym(V)$ be an involution centralising the second-maximal subgroup $\sigma_W$ of $T$ and also not fixing any of the cosets of $W$. Then $\phi \in \AGL(V)$. 
\end{lemma}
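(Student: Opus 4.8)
The plan is to use the explicit wreath-product description of the centraliser $C_{\Sym(V)}(\sigma_W)$ supplied by Corollary~\ref{cor_Cwreath} (taking $m=2$), namely $C_{\Sym(V)}(\sigma_W)=\sigma_W\wr\Sym(V/W)$, and to reduce the statement to a single additivity identity.

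First I would fix a $2$-dimensional complement $U=\Span{v_1,v_2}$ of $W$, so that each $x\in V$ is uniquely $x=w+u$ with $w\in W$ and $u\in U$, and identify the four cosets of $W$ with the elements of $U$. Since $\phi$ centralises $\sigma_W$, one has $\phi(x+w)=\phi(x)+w$ for all $x\in V$ and $w\in W$; hence $\phi$ is determined by its values on the representatives $0,v_1,v_2,v_1+v_2$, and it permutes the cosets by some $\pi\in\Sym(V/W)\cong\Sym(4)$. Writing $\phi(u)=\pi(u)+a_u$ with $a_u\in W$ for each $u\in U$ records $\phi$ as the pair $(\pi,(a_u)_{u\in U})$ in the wreath product.

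Next I would extract the consequences of the hypotheses. Via the projection onto $\Sym(V/W)$, the condition $\phi^2=\one_{\Sym(V)}$ forces $\pi^2=\one$, and comparing $W$-components in $\phi^2(u)=u$ yields $a_{\pi(u)}=a_u$ for every $u$. The assumption that $\phi$ fixes no coset says $\pi$ is fixed-point-free; a fixed-point-free involution of a $4$-element set is a product of two transpositions, so $\pi$ partitions $U$ into two pairs.

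Finally I would prove $\phi$ affine by showing that $\psi\colon x\mapsto\phi(x)+\phi(0)$ is linear. A direct check gives $\psi(0)=0$, $\psi|_W=\mathrm{id}$, and $\psi(x+w)=\psi(x)+w$; these reduce the linearity of $\psi$ to the single identity $\psi(v_1+v_2)=\psi(v_1)+\psi(v_2)$, that is $\phi(v_1+v_2)=\phi(0)+\phi(v_1)+\phi(v_2)$. Splitting this into $U$- and $W$-components, the $U$-component holds for \emph{every} $\pi$, since the four values $\pi(u)$ exhaust $U=\FF^2$ and the sum of all elements of $\FF^2$ is $0$. The $W$-component reads $a_0+a_{v_1}+a_{v_2}+a_{v_1+v_2}=0$, and this is exactly where the hypotheses become decisive: because $\pi$ partitions $U$ into two pairs and $a_{\pi(u)}=a_u$, the four shifts coincide in pairs and their sum vanishes. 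I expect this $W$-component to be the crux, as it is precisely fixed-point-freeness that produces the pairing forcing the sum to be zero. Hence $\psi$ is linear and $\phi\in\AGL(V)$.
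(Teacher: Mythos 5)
Your proof is correct, but it takes a genuinely different route from the paper's. The paper argues by \emph{construct-and-compare}: after normalising the coset action to $(W,W_1)(W_2,W_{1,2})$, it observes that $\phi$ is completely determined by that action together with the two values $0\phi$ and $v_2\phi$, then writes down an explicit affinity $\bar\phi\colon w+\alpha v_1+\beta v_2\mapsto w+\alpha v_1+\beta v_2+\alpha x+\beta y+z$ carrying the same determining data, and concludes $\phi=\bar\phi$ by this uniqueness. You instead verify affinity \emph{intrinsically}: encoding $\phi$ in the wreath-product coordinates $(\pi,(a_u)_{u\in U})$ of $C_{\Sym(V)}(\sigma_W)=\sigma_W\wr\Sym(V/W)$ (Corollary~\ref{cor_Cwreath}), you reduce linearity of $\psi\colon x\mapsto \phi(x)+0\phi$ to the single identity $\phi(v_1+v_2)=\phi(0)+\phi(v_1)+\phi(v_2)$, and check its two components separately: the $U$-component holds for \emph{any} coset permutation, since the four values $\pi(u)$ run over all of $U$ and the elements of $U$ sum to zero, while the $W$-component $a_0+a_{v_1}+a_{v_2}+a_{v_1+v_2}=0$ uses the involution condition ($a_{\pi(u)}=a_u$) together with fixed-point-freeness (which pairs up all four shifts, so the sum vanishes in characteristic $2$); bijectivity of $\psi$ is automatic because $\phi$ is a permutation, so $\psi\in\GL(V)$ and $\phi\in\AGL(V)$. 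Your argument avoids both the WLOG normalisation of the coset action and the explicit affine formula, and it isolates precisely where each hypothesis enters — in particular, that fixed-point-freeness is needed only for the $W$-component. What the paper's constructive version buys in exchange is the explicit affine form of such involutions, which is reused downstream in the constructive proof of Theorem~\ref{thm_TinAGL} and in the parametrisation of Proposition~\ref{cor_1}; your proof establishes membership in $\AGL(V)$ without producing those formulas.
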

\begin{proof}
Since $\phi$ is regular and it centralises $\sigma_W$, its action on $V$ is completely determined by its action on the cosets of $W$, which we may assume being as the involution $(W,W_1)(W_2, W_{1,2})$, and by the choices of $0\phi$ and $v_2\phi$. Indeed, if $ 0 \ne u \in W$, then $u\phi$ is determined as $u\phi=0\sigma_u\phi=0\phi\sigma_u \in W_1$. Similarly $v_2\phi$ determines the action of $\phi$ on each element of $W_{2}$. Set now $z \deq 0\phi \in W_1$, $x\deq v_1\phi+z+v_1$ and $y\deq v_2\phi+z+v_2$. 
Let $\bar{\phi}$ be the affinity sending $w+\alpha v_1+\beta v_2 \mapsto w+\alpha v_1+\beta v_2 + \alpha x + \beta y +z$, where $\alpha,\beta \in \FF$. Then 
an easy check shows that $0\bar{\phi}=z$, $v_2\bar\phi = v_2\phi$, and that $\bar\phi$ is an involution which centralises $\sigma_W$ and acts on the cosets of $W$ as the involution $(W,W_1)(W_2, W_{1,2})$. Therefore $\bar{\phi}=\phi$, which is what we meant to prove. 
\end{proof}

In the very special case $n=3$ every fixed-point-free involution in $\Sym(V)$ centralising $\sigma_W$ is affine. Indeed, since $n=3$, $W = \Span{w}$. Now, if $\phi$ fixes all the four cosets of $W$, then $\phi$ is the translation $\sigma_w$. If $\phi$ acts on the cosets without fixed points, then Lemma~\ref{lem_cosets} applies. Finally, we are left with the case where two cosets are fixed and two are exchanged. Assuming without loss of generality that $\phi$ 
acts on the cosets as $(W, W_1)$, then $\phi$ is the affinity sending $(\gamma w+\alpha v_1+\beta v_2) \mapsto (\gamma +\alpha) w +(\alpha+\beta+1) v_1 + \beta v_2$, where $\alpha, \beta,\gamma \in \FF$. A straightforward check may be also performed using MAGMA \cite{Bosma1997}.

\begin{remark}\label{rmk_action}
In the following theorem we will use a well-known fact: if $G<\Sym(V)$ is a regular subgroup and $H \isnorm G$, then $G/H$ acts regularly on the set of the orbits of $H$ (i.e.\ the cosets of $H$).
\end{remark}
The following straightforward consequence of Lemma~\ref{lem_cosets} and Remark~\ref{rmk_action} is the main contribution of this section. We however include a second constructive proof, in view of its use in the remainder of the paper. The notation used here and below is the one specified at the beginning of Sec.~\ref{sec_pre}.
\begin{theorem}\label{thm_TinAGL}
Let $g \in \Sym(V)$ and $W\leq V$ such that $\sigma_W =T \cap T^g$. If $\dim(W) = n-2$, then $T^g  < \AGL(V)$.
\end{theorem}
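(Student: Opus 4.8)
The plan is to realise $T^g$ as generated by $\sigma_W$ together with two coset representatives, and to show that each representative lands in $\AGL(V)$ by invoking Lemma~\ref{lem_cosets}. First I would record the structural facts: since $T^g$ is conjugate to $T$, it is a regular, elementary abelian subgroup of $\Sym(V)$ of order $2^n$, and $\sigma_W=T\cap T^g$ is a subgroup of index $4$ in $T^g$. As $T^g$ is abelian, $\sigma_W\isnorm T^g$. The orbits of $\sigma_W$ on $V$ are precisely the four cosets $W, W_1, W_2, W_{1,2}$ of $W$, so by Remark~\ref{rmk_action} the quotient $T^g/\sigma_W\cong\FF^2$ acts \emph{regularly} on this set of four cosets.

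Next I would exploit this regular action. Pick any $\phi\in T^g\setminus\sigma_W$. Its image $\bar\phi$ in $T^g/\sigma_W$ is non-trivial, hence acts without fixed points on the four cosets; since every element of $\sigma_W$ fixes each coset setwise, the action of $\phi$ on the cosets coincides with that of $\bar\phi$, and so $\phi$ fixes none of the cosets of $W$. Moreover $\phi$ is a fixed-point-free involution (every non-identity element of the elementary abelian regular group $T^g$ is such) and it centralises $\sigma_W$ because $T^g$ is abelian. Thus $\phi$ satisfies all the hypotheses of Lemma~\ref{lem_cosets}, whence $\phi\in\AGL(V)$.

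Finally I would assemble the conclusion. Choosing $\phi_1,\phi_2\in T^g\setminus\sigma_W$ whose images form a basis of $T^g/\sigma_W$, we have $T^g=\Span{\sigma_W,\phi_1,\phi_2}$. By the previous paragraph $\phi_1,\phi_2\in\AGL(V)$, and $\sigma_W\leq T<\AGL(V)$, so $T^g\leq\AGL(V)$; the inclusion is proper since $\Size{\AGL(V)}=2^n\,\Size{\GL(V)}>2^n=\Size{T^g}$ for $n>2$. I do not expect a serious obstacle here: the genuine work is already concentrated in Lemma~\ref{lem_cosets} and Remark~\ref{rmk_action}, and the only point demanding care is the verification that \emph{every} non-identity coset representative acts fixed-point-freely on the four cosets, which is exactly what the regularity of the $T^g/\sigma_W$-action supplies. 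A more hands-on alternative — the \emph{constructive} proof the authors announce they keep for later use — would instead exhibit the explicit affine form of each $\phi$ as in the proof of Lemma~\ref{lem_cosets}, at the cost of more computation but with the benefit of an explicit parametrisation.
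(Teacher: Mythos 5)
Your proof is correct and takes essentially the same route as the paper's own (first) proof: the paper likewise reduces to showing that generators of $T^g$ outside $\sigma_W$ satisfy the hypotheses of Lemma~\ref{lem_cosets}, with Remark~\ref{rmk_action} supplying the fixed-point-free action on the four cosets of $W$. The constructive alternative you sketch at the end is precisely the paper's second proof, which the authors include only because its explicit parametrisation is reused later.
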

\begin{proof}
It is enough to show that if $T^g = \Span{\pi,\ve, \sigma_W}$, then $\pi, \ve \in \AGL(V)$. This is granted by Lemma~\ref{lem_cosets}, since $\pi$ and $\ve$ are regular involutions centralising $W$ and not fixing any of its cosets. \medskip

Alternatively, we now construct explicitly two affinities $\bar{\pi}$ and $\bar{\ve}$ 
which are respectively congruent to $\pi$ and $\ve$ modulo translations in $\sigma_W$. A similar construction in provided is \cite{Calderini2017}. Let us assume $W$ is spanned by the last $n-2$ vectors of the canonical basis of $V$. Notice that $\pi$, $\ve \in T^g$, thus
\begin{enumerate}
\item $\pi$ and $\ve$ are fixed point free, 
\item $\pi$ and $\ve$ are involutions,
\item $\pi,\ve \in C_{\Sym(V)}(\sigma_W)$,
\item $\pi\ve=\ve\pi$.
\end{enumerate}
Moreover, since $T^g$ is regular, by Remark~\ref{rmk_action} it is then possible to assume that $\pi$ acts on the cosets of $W$ as the involution $(W,W_1)(W_2, W_{1,2})$, whereas $\ve$ acts as $(W,W_2)(W_1, W_{1,2})$. 
Up to a composition by a translation in $\sigma_W$, one can assume that $v_2\pi = v_1+v_2$, so that the action of $\pi$ on $V$ is completely determined by the value of $0\pi \in W_1$.
Similarly, modulo a translation in $\sigma_W$, let us assume that $v_1\ve=v_1+v_2$. Consequently, since $T^g = \Span{\pi, \ve , \sigma_W}$, the action of $T^g$ on $V$ is completely determined by $W$ and by the values of $0\pi$, $0\ve$ and $0\pi\ve$.
What remains to be proven is that for each possibile choice of $0\pi$ in $W_1$,   $0\ve$ in $W_2$ and $0 \pi\ve$ in $W_{1,2}$ there exist two affinities $\bar\pi, \bar\ve 	\in \AGL(V)$ such that for each $v \in V$ we have $v\pi = v\bar\pi $ and $v \ve = v \bar\ve$.  
Consider now the functions $\bar\pi$ sending $x\mapsto x+x^{(2)}b+0\pi$ and $\bar\ve$ sending $x \mapsto x+x^{(1)}b+0\ve$, where $b = 0\pi+0\ve+0\pi\ve$.
Then $\bar\pi, \bar\ve \in \AGL(V)$ satisfy the four properties listed above and 
$0\bar\pi = 0\pi$, $v_2\bar\pi= v_1+v_2 \mod W$, $0\bar\ve = 0\ve$, $v_1\bar\ve= v_1+v_2 \mod W$. Therefore $\pi=\bar{\pi}$ and $\ve = \bar\ve$, hence the desired result is proved. 
\end{proof}

\begin{remark}\label{rem_TinAGLp}
In the hypotheses of Theorem~\ref{thm_TinAGL}, by interchanging the roles of $T$ and $T^g$, one can easily obtain that also $T$ is a subgroup of $\AGL(V)^g$, i.e.\ $T$ normalises $T^g$.
\end{remark}

It is convenient to give a more practical representation of the groups $T^g$ such that $\dim(W )= n-2$, where $\sigma_W = T \cap T^g$.
From now on, we assume that $W$ is spanned by the last $n-2$ vectors of the canonical basis of $V$. The next result gives a parametrisation and counts the number of subgroups with such a property. 

\begin{proposition}\label{cor_1}
Let $W \deq \Span{e_i \mid  3 \leq i \leq n}$.
The group $\Sym(V)$ contains $2^{n-2}-1$ elementary abelian regular subgroups $T_{b}$, where   $b\in W\setminus\Set{0}$, such that $T \cap T_b = \sigma_W$. More precisely, $T_{b} = \Span{\pi_b,\ve_b, \sigma_{e_i} \mid 3 \leq i \leq n}$, where
\begin{equation}\label{eq_pive}
\pi_b=
\left(
 \begin{array}{c|c}
 \one_{2} & 
 \begin{array}{c}
0\\
b^{(3:n)}
 \end{array}
 \\ 
 \hline
0& \one_{n-2}
 \end{array} 
 \right)\sigma_{e_1}, \quad
\ve_b = 
\left(
 \begin{array}{c|c}
 \one_{2} & 
 \begin{array}{c}
b^{(3:n)}\\
0
 \end{array}
 \\ 
 \hline
 0 & \one_{n-2}
 \end{array} 
 \right)\sigma_{e_2}.
 \end{equation}
\end{proposition}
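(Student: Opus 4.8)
The plan is to use Theorem~\ref{thm_TinAGL} as the starting point: it tells us that any regular elementary abelian subgroup $T_b$ meeting $T$ in $\sigma_W$ (with $\dim W = n-2$) is contained in $\AGL(V)$ and is generated by $\sigma_W$ together with two regular involutions that act on the four cosets of $W$ without fixed points. So the work here is purely a parametrisation and counting argument: I want to show that, once $W = \Span{e_i \mid 3 \leq i \leq n}$ is fixed, the affine involutions $\pi, \ve$ can be normalised into the explicit matrix form~\eqref{eq_pive}, and that the single free parameter that survives is a vector $b \in W$.

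First I would invoke the constructive part of the proof of Theorem~\ref{thm_TinAGL}. That argument already produced affinities $\bar\pi \colon x \mapsto x + x^{(2)} b + 0\pi$ and $\bar\ve \colon x \mapsto x + x^{(1)} b + 0\ve$ with $b = 0\pi + 0\ve + 0\pi\ve$, and showed $\pi = \bar\pi$, $\ve = \bar\ve$. My job is to read off the matrix/translation decomposition of these two maps. Writing $x \mapsto x + x^{(2)} b$ as a linear map, its matrix is $\one_n$ plus the rank-one contribution sending the second coordinate to $b$; since the construction already used a translation in $\sigma_W$ to arrange $v_2\pi \equiv v_1 + v_2 \pmod W$ and $v_1\ve \equiv v_1+v_2 \pmod W$, the first two coordinates of $0\pi$ and $0\ve$ are pinned down to $e_1$ and $e_2$ respectively. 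This is exactly what lets me split off the $\sigma_{e_1}$ and $\sigma_{e_2}$ factors and leaves the linear parts as the block matrices displayed in~\eqref{eq_pive}, whose nontrivial off-diagonal block is $b^{(3:n)}$.

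Next I would handle the counting. The normalisation $v_2\pi \equiv v_1+v_2$, $v_1\ve \equiv v_1+v_2$ removes the freedom in the first two coordinates of $0\pi, 0\ve, 0\pi\ve$, so the only remaining datum is $b \in W$; and $b$ lies in $W$ precisely because $b^{(1)} = b^{(2)} = 0$ follows from these normalisations. The case $b = 0$ gives $\pi = \sigma_{e_1}$, $\ve = \sigma_{e_2}$, so $T_b = T$, which is excluded since we require $T \neq T_b$. Thus $b$ ranges over $W \setminus \Set{0}$, giving $2^{n-2} - 1$ subgroups, and I must check the map $b \mapsto T_b$ is injective: distinct $b$ give distinct actions on, say, $0\pi \in W_1$, since $0\pi = e_1 + b^{(3:n)}$ recovers $b$. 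Finally I would verify that the displayed $\pi_b, \ve_b$ genuinely satisfy properties (1)--(4) and generate an elementary abelian group meeting $T$ in exactly $\sigma_W$.

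I expect the only mildly delicate step to be the bookkeeping of the two translation normalisations: one must be careful that the reduction modulo $\sigma_W$ used to fix $v_2\pi$ and $v_1\ve$ does not secretly eat up the parameter $b$, and that the single vector $b$ really does control both $\pi_b$ and $\ve_b$ simultaneously (rather than each needing its own parameter). This coupling is forced by the commutativity relation $\pi\ve = \ve\pi$ together with $b = 0\pi + 0\ve + 0\pi\ve$, and it is the one place where a short explicit computation is needed; everything else is a direct transcription of the output of Theorem~\ref{thm_TinAGL} into matrix form.
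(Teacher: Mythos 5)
Your proposal is correct and takes essentially the same route as the paper: the paper's proof likewise reads the necessary form of $\pi_b,\ve_b$ off the constructive proof of Theorem~\ref{thm_TinAGL} and then observes that $b$ is an invariant of the group (for any $\pi\in T_b$ the quantity $x\pi+x+0\pi$ is a scalar multiple of $b$), so the subgroups are in one-to-one correspondence with $b\in W\setminus\Set{0}$. Your extra bookkeeping (excluding $b=0$, checking that the $\sigma_W$-normalisation does not absorb the parameter, and verifying properties (1)--(4)) only makes explicit what the paper leaves implicit.
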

\begin{proof}
As in the proof of Theorem~\ref{thm_TinAGL} we necessarily have  $\pi_b$ and $\ve_b$ to be defined by Eq.~\eqref{eq_pive}. 
Note that $b$ is completely determined by the fact that $x\pi+x+0\pi$ is a scalar multiple of $b$ for any $\pi \in T_b$.
 Hence the desired result follows from the fact that such subgroups are in one-to-one correspondence with the possible choices of $b \in W \setminus\Set{0}$.
\end{proof}

The general problem of parametrising all the elementary abelian regular subgroups $\bar{T}$ of $\Sym(V)$ and of $\AGL(V)$ according to the size of their intersection with $T$ is not easy in general.  Proposition~\ref{cor_1} solves this problem in the case of second-maximal intersection subgroups. Partial results have been obtained in the case $\bar{T} < \AGL(V)$ \cite{Calderini2017, Civino2018, Brunetta2017}. In~\cite{Calderini2017} a result similar to the following corollary is proved, where $\AGL(V)$ appears in place of $\Sym(V)$. The present form is a consequence of Theorem~\ref{thm_TinAGL} and the result is easily derived by Proposition~\ref{cor_1}.

\begin{corollary}\label{cor_tn}
The group $\Sym(V)$ contains $t_n$ elementary abelian regular subgroups whose intersection with $T$ is a second-maximal subgroup of $T$, where 
\[
t_n \deq \frac{\left(2^{n-2}-1\right)\left(2^{n-1}-1\right)\left(2^n-1\right)}{3}.
\]
\end{corollary}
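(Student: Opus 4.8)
The plan is to count, via a double-counting argument, all elementary abelian regular subgroups $\bar T$ of $\Sym(V)$ for which $T\cap \bar T$ has index four in $T$, i.e.\ $T\cap\bar T=\sigma_W$ for some $(n-2)$-dimensional subspace $W\leq V$. The total count $t_n$ should factor as a product of two contributions: the number of codimension-two subspaces $W$ of $V$, times the number of such regular subgroups $T_b$ that intersect $T$ exactly in a \emph{fixed} $\sigma_W$. Proposition~\ref{cor_1} has already supplied the second factor: for the standard $W=\Span{e_i\mid 3\leq i\leq n}$ there are exactly $2^{n-2}-1$ such subgroups, parametrised by $b\in W\setminus\Set{0}$. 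By the homogeneity of $T$ under $\GL(V)$-conjugation (equivalently, the transitivity of $\AGL(V)$ on codimension-two subspaces of $V$), the same count $2^{n-2}-1$ holds for every $(n-2)$-dimensional $W$.

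First I would record the number of $(n-2)$-dimensional subspaces of the $n$-dimensional space $V$ over $\FF$. This is the Gaussian binomial coefficient $\binom{n}{2}_2$, which by the standard formula equals
\[
\binom{n}{2}_2=\frac{(2^n-1)(2^{n-1}-1)}{(2^2-1)(2-1)}=\frac{(2^n-1)(2^{n-1}-1)}{3}.
\]
Next I would argue that distinct subspaces $W$ give rise to disjoint families of subgroups: if $\bar T$ is an elementary abelian regular subgroup with $T\cap\bar T$ second-maximal, then $W$ is recovered as the unique codimension-two subspace with $\sigma_W=T\cap\bar T$, so each $\bar T$ is counted under exactly one $W$. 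Combining the two counts multiplicatively then yields
\[
t_n=\binom{n}{2}_2\cdot\left(2^{n-2}-1\right)=\frac{\left(2^{n-2}-1\right)\left(2^{n-1}-1\right)\left(2^n-1\right)}{3},
\]
which is the asserted value.

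The one point requiring genuine care is the claim that every codimension-two $W$ carries the same number $2^{n-2}-1$ of regular subgroups meeting $T$ in $\sigma_W$; Proposition~\ref{cor_1} only treats the standard coordinate subspace. The clean way to close this gap is to invoke the transitivity of $\GL(V)$ on the set of $(n-2)$-dimensional subspaces of $V$: choosing $\gamma\in\GL(V)\leq\AGL(V)$ with $W\gamma^{-1}=\Span{e_i\mid 3\leq i\leq n}$, conjugation by $\gamma$ normalises $T$ (hence fixes the condition $\sigma_{W}=T\cap\bar T$ up to relabelling $W$) and sets up a bijection between the $T_b$ of Proposition~\ref{cor_1} and the regular subgroups intersecting $T$ in $\sigma_W$. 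This bijectivity, together with Theorem~\ref{thm_TinAGL} guaranteeing that all these subgroups genuinely lie in $\Sym(V)$ (indeed in $\AGL(V)$), is what legitimises treating the two factors as independent. I expect this orbit-counting step to be the main obstacle, since it is the only place where one must move beyond the explicit coordinate description and appeal to a symmetry argument; the arithmetic of the Gaussian binomial coefficient is routine.
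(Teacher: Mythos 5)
Your proposal is correct and follows essentially the same route as the paper: the paper also computes $t_n$ as the product of $2^{n-2}-1$ (from Proposition~\ref{cor_1}) and the number of $(n-2)$-dimensional subspaces of $V$, which it writes as $(2^n-1)(2^n-2)/6$, equal to your Gaussian binomial $\bigl(2^n-1\bigr)\bigl(2^{n-1}-1\bigr)/3$. The only difference is that you make explicit the $\GL(V)$-transitivity argument transporting the count from the coordinate subspace to an arbitrary codimension-two $W$, a step the paper leaves implicit.
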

\begin{proof}
The integer $t_n$ may be obtained as the product of $2^{n-2}-1$ and $(2^n-1)(2^n-2)/6$, respectively the number of elementary abelian regular subgroups which intersect $T$ in the subspace spanned by the last $n-2$ vectors of the canonical basis and the number of $(n-2)$-dimensional subspaces of $V$. 
\end{proof}


\section{Sylow $2$-subgroups of $\AGL(V)$}\label{sec_syl}

In this section, $S$ will denote a Sylow $2$-subgroup of $\AGL(V)$. Up to a conjugation with an element in $\AGL(V)$ we can assume
\begin{equation}\label{eq_syl}
S \deq 
\Set{
U\sigma_v \mid U \in \mathcal U, v \in V
}
\end{equation}
where $\mathcal U$ is the group of upper unitriangular matrices, a Sylow $2$-subgroup of $\GL(V)$.  
 Notice that $\mathcal U$ is generated by the matrices $1_{n}+E_{i,i+1}$, where  $1\leq i\leq n-1$ and $E_{i,j}$ is the matrix whose entries are all zero except the $(i,j)$-th entry which is 1.
\begin{remark}\label{rem:flag}
The action by conjugation of $S$ on $T$ is canonically identified as the action of $\mathcal U$ on $V$. As a $\mathcal U$-module, $V$ is uniserial, i.e.\ any $\mathcal U$-submodule of $V$ belongs to the \emph{maximal flag} $\Set{0} = V_0 < V_1 < \ldots < V_n = V$, where $V_i = \Span{ e_{n-i+1},\ldots,e_{n}}$ and $1 \leq i \leq n$. Conversely, for each given maximal flag $\mathcal F$ whose members are
\[\Set{0}= V_0 < V_1
< \ldots < V_n=V, \] if $\mathcal U$ is the stabiliser of $\mathcal F$ in $\GL(V)$, then $\mathcal U T$ is a Sylow $2$-subgroup of $\AGL(V)$. The previous construction yields a one-to-one correspondence between Sylow $2$-subgroups of $\AGL(V)$ and the set of maximal flags of subspaces of $V$. Indeed, given a maximal flag $V_0 < V_1 < \ldots < V_n$, the corresponding Sylow $2$-subgroup is exactly the stabiliser by conjugation of $\Set{\one_{\Sym(V)}}=\sigma_{V_0}<\sigma_{V_1}<\ldots<\sigma_{V_n}=T$. This fact is used throughout this section without any further reference.
\end{remark}
The following theorem is crucial in this section.
\begin{theorem}\label{prop_Syl}
Every Sylow $2$-subgroup $\Sigma$ of $\AGL(V)$  contains exactly one elementary abelian regular subgroup $T_\Sigma$ intersecting $T$ in a second-maximal subgroup of $T$ and which is normal in $\Sigma$.
\end{theorem}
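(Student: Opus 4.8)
The plan is to establish existence and uniqueness separately, leveraging the flag/Sylow correspondence from Remark~\ref{rem:flag} and the explicit parametrisation of second-maximal-intersection subgroups from Proposition~\ref{cor_1}. Without loss of generality I would take $\Sigma = S$ as in Eq.~\eqref{eq_syl}, so that the associated maximal flag has $V_i = \Span{e_{n-i+1},\ldots,e_n}$ and in particular $V_{n-2} = W = \Span{e_i \mid 3 \leq i \leq n}$. By Proposition~\ref{cor_1}, every elementary abelian regular subgroup meeting $T$ in $\sigma_W$ has the form $T_b = \Span{\pi_b,\ve_b,\sigma_{e_i} \mid 3\leq i \leq n}$ for a unique $b \in W\setminus\Set{0}$. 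The key reduction is that if such a $T_{\bar T}$ is to be normal in $\Sigma$, then $T\cap T_{\bar T}$ must be $\Sigma$-invariant; since the $\Sigma$-invariant (equivalently $\mathcal U$-invariant) subspaces of $V$ are exactly the flag members, the only second-maximal candidate for the intersection is $\sigma_W=\sigma_{V_{n-2}}$. This forces any normal second-maximal-intersection subgroup of $\Sigma$ to lie among the $T_b$, which is what makes the counting tractable.

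\textbf{Existence.} For a fixed generator of $\mathcal U$, say $U_i \deq \one_n + E_{i,i+1}$, I would compute the conjugates $U_i^{-1}\pi_b U_i$ and $U_i^{-1}\ve_b U_i$ using the matrix form in Eq.~\eqref{eq_pive} and the fact that conjugation by $\sigma_v$ and by $U$ interact affinely. The requirement $T_b \isnorm \Sigma$ translates into a system of linear conditions on $b^{(3:n)}$ expressing that each conjugate again lies in $T_b$ (i.e.\ has the same $b$ up to the allowed translations in $\sigma_W$). Because $\Sigma$ is generated by $T$ together with the $U_i$, and $T$ already normalises each $T_b$ by construction (Remark~\ref{rem_TinAGLp} gives that $T$ normalises $T^g$), it suffices to impose invariance under the finitely many $U_i$. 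I expect this linear system to pin down $b$ uniquely, and the natural candidate is $b = e_n$ (the bottom of the flag inside $W$), since $e_n$ spans the unique $\mathcal U$-invariant line $V_1 \leq W$; one checks directly that $\pi_{e_n}$ and $\ve_{e_n}$ are stable under conjugation by $\mathcal U$ modulo $\sigma_W$.

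\textbf{Uniqueness.} Having reduced to the family $\Set{T_b}_{b\in W\setminus\Set{0}}$, uniqueness amounts to showing that only one value of $b$ yields a $\Sigma$-normal subgroup. Here I would argue that the conjugation action of $\mathcal U$ on the parameter $b$ is essentially its action on $V/(\text{lower flag})$, and that a $T_b$ is normalised precisely when $\Span{b}$ is $\mathcal U$-invariant inside $W$; since $V$ is uniserial as a $\mathcal U$-module (Remark~\ref{rem:flag}), the only $\mathcal U$-invariant line contained in $W$ is $V_1 = \Span{e_n}$, giving $b = e_n$ up to the irrelevant scalar (over $\FF$ the only nonzero scalar is $1$). \textbf{The main obstacle} I anticipate is the bookkeeping in the conjugation computation: because $\pi_b$ and $\ve_b$ are products of a unitriangular matrix with a translation, conjugating by $U_i$ mixes the linear and translation parts, and one must carefully verify that the ``impurities'' introduced always fall inside $\sigma_W$ (so that the conjugate still lies in $\Span{\pi_b,\ve_b,\sigma_W}$) rather than altering $b$. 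Isolating the genuinely $b$-dependent condition from the terms absorbed into $\sigma_W$ is the delicate step; once that linear condition is extracted, the uniserial structure of $V$ closes the argument immediately.
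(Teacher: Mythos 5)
Your proposal is correct, and its uniqueness half coincides with the paper's: the paper also reduces, via Proposition~\ref{cor_1}, to the family $\Theta=\Set{T_b \mid b\in W\setminus\Set{0}}$ and then computes $xU^{-1}\pi_bU = x + (xU^{-1})^{(2)}bU + e_1U$ to conclude that normality of $T_b$ in $\Sigma$ forces $b$ to be a common eigenvector of $\mathcal U$, i.e.\ $\Span{b}=V_1$ and $b=e_n$ by uniseriality --- exactly your invariant-line argument. Where you genuinely diverge is \emph{existence}: the paper never verifies directly that $T_{e_n}$ is normal. Instead it notes that $\Sigma$ permutes $\Theta$ by conjugation (since $\Sigma$ normalises $T$ and fixes $\sigma_W$) and that $\Size{\Theta}=2^{n-2}-1$ is odd, so the $2$-group $\Sigma$ must fix some $T_b$; the eigenvector computation, needed only in the direction ``normal $\Rightarrow b=e_n$'', then identifies which one. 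Your route instead requires the converse verification for $b=e_n$, which is precisely the ``impurities fall into $\sigma_W$'' bookkeeping you flag as delicate; it does go through, but note that your phrase ``$\pi_{e_n}$ and $\ve_{e_n}$ are stable under conjugation by $\mathcal U$ modulo $\sigma_W$'' is slightly too strong: depending on the $(1,2)$ entry of $U$, the conjugate $\pi_{e_n}^U$ can equal $\pi_{e_n}\ve_{e_n}\sigma_w$ rather than $\pi_{e_n}\sigma_w$, so what is stable is $T_{e_n}$ as a whole, not each generator modulo translations. The paper's parity trick buys the right to skip that computation entirely; your version buys a clean two-sided characterisation ($T_b\isnorm\Sigma$ iff $bU=b$ for all $U\in\mathcal U$) and, additionally, makes explicit a step the paper leaves to the blanket appeal to Remark~\ref{rem:flag}, namely that $T\cap\bar T$ is $\Sigma$-invariant and hence must be $\sigma_{V_{n-2}}$, so no other second-maximal intersection can occur.
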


\begin{proof}
Since $T \isnorm \AGL(V)$, then $T$ is a normal subgroup of every Sylow $2$-subgroup of $\AGL(V)$. Without loss of generality we can assume $\Sigma$ to be the Sylow $2$-subgroup $S$ defined in Eq.~\eqref{eq_syl}.
Let us now define $W \deq \Span{e_i \mid 3 \leq i \leq n}$  and a family of groups 
$\Theta \deq \Set{T_{b} \mid b \in W\setminus\{0\} }$, where 
\[
T_b \deq \Span{\pi_b,\ve_b,
 \sigma_W} < S.
\]
and $\pi_b$ and $\ve_b$ are as in the statement of Proposition~\ref{cor_1}. Since $\Size{\Theta}$ is odd and $S$ fixes $T$ by conjugation, then there exists $b \in W\setminus\Set{0}$ such that $S$ fixes $T_b$ by conjugation, i.e.\ $T_b \isnorm S$. It remains to be proven that such a group is unique. 
Now we prove that  the unique $b \in W$ that corresponds to a normal subgroup in $S$ is $(0,0,\ldots,0,1)$, from which the desired result follows.
First, by Remark~\ref{rem_TinAGLp}, we have $T< N_{\Sym(V)}(T_b)$ for every $b\in W$, so each $\sigma_v\in T<S$ normalises $T_b$. Moreover, any upper unitriangular matrix  in $\mathcal{U}$ fixes by conjugation $\sigma_W$, indeed if $U\in\mathcal{U}$ then $\sigma_{e_{i}}^U=\sigma_{e_{i}U}$ and $e_i U\in W$ for every $3 \leq i \leq n$. Therefore we are left to determine $b$ such that $\theta^{U}\in T_b$, for every $\theta\in\Set{\pi_b,\ve_b,\pi_b\ve_b}$ and $U\in\mathcal{U}$. 
Note that  $\theta \in T_b$ implies that 
$
x +x\theta+0\theta = \alpha b,
$
for some scalar $\alpha \in \FF$.
Now, for $\theta = \pi_b$, 
\[
\begin{array}{rl}
x\theta^U=xU^{-1}\pi_b U & = (xU^{-1} + (xU^{-1})^{(2)} b + e_1)U\\
& = xU^{-1}U + (xU^{-1})^{(2)} bU + e_1U\\
& = x + (xU^{-1})^{(2)} bU + e_1U.
\end{array}
\]
Hence $\pi_b \in T_b$ implies
$x+x\pi_b+0\pi_b = (xU^{-1})^{(2)} bU = \alpha b$.
As a consequence $b$ is a common eigenvector for the elements of $\mathcal U$, therefore $b=(0,0,\ldots,0,1)$.
\end{proof}

The previous theorem has the following converse. The same notation is used.

\begin{proposition}\label{rmk_TinS}
If  $\bar{T}$ is an elementary abelian regular subgroup of $\AGL(V)$ such that $\Size{\bar{T}\cap T}=2^{n-2}$, then there exists a Sylow $2$-subgroup $\Sigma$ of $\AGL(V)$  such that $\bar{T}=T_\Sigma\isnorm \Sigma$. 
\end{proposition}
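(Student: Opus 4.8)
The plan is to reduce the statement to the already-established Theorem~\ref{prop_Syl} by a single conjugation. Write $\bar T\cap T=\sigma_{\bar W}$, so that $\bar W\leq V$ has dimension $n-2$; by Theorem~\ref{thm_TinAGL} the group $\bar T$ lies in $\AGL(V)$, hence each of its elements is an affinity $x\mapsto xA+c$. First I would isolate the \emph{intrinsic invariant} of $\bar T$. For $\theta\in\bar T\setminus\sigma_{\bar W}$ with linear part $A_\theta$, the quotient $\bar T/\sigma_{\bar W}$ acts regularly on the four cosets of $\bar W$ (Remark~\ref{rmk_action}) and therefore, being the only regular elementary abelian subgroup of order $4$ inside $\Sym(V/\bar W)$, acts as the full translation group; consequently every $A_\theta$ induces the identity on $V/\bar W$ and $\mathrm{Im}(A_\theta-\one)\subseteq\bar W$. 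A direct check (or Proposition~\ref{cor_1} read in a basis adapted to $\bar W$) shows that this image is one and the same line $\Span{b}$ with $0\neq b\in\bar W$, independent of $\theta$, and that $\bar T$ is the \emph{unique} elementary abelian regular subgroup with $\bar T\cap T=\sigma_{\bar W}$ and invariant $b$.

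Next I would move the pair $(\bar W,b)$ into standard position. Choose $h\in\GL(V)$ with $\Span{e_i\mid 3\leq i\leq n}\,h=\bar W$ and $e_n h=b$; this is possible since $e_n$ lies in the standard $(n-2)$-dimensional subspace and $b\in\bar W$, so one matches bases of the two subspaces beginning with $e_n$ and $b$ respectively and extends to bases of $V$. Conjugation by $h$ respects the relevant data: from $\sigma_v^{\,h}=\sigma_{vh}$ one gets $(T_{e_n}\cap T)^{h}=\sigma_{\bar W}$, while the linear part transforms as $A\mapsto h^{-1}Ah$, so the invariant line of $T_{e_n}^{\,h}$ is $\Span{e_n}h=\Span{b}$. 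Here $T_{e_n}$ denotes the subgroup of the standard Sylow subgroup $S$ produced by Theorem~\ref{prop_Syl}. By the uniqueness established in the first step, $T_{e_n}^{\,h}=\bar T$.

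Finally, since $S$ normalises $T_{e_n}$ by Theorem~\ref{prop_Syl}, the conjugate $\Sigma\deq S^{h}$, again a Sylow $2$-subgroup of $\AGL(V)$, normalises $T_{e_n}^{\,h}=\bar T$. Thus $\bar T\isnorm\Sigma$ is an elementary abelian regular subgroup meeting $T$ in a second-maximal subgroup, and the uniqueness clause of Theorem~\ref{prop_Syl} forces $\bar T=T_\Sigma$, as required.

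I expect the main obstacle to be the first step: one must verify that $b$ is a genuinely basis-free invariant of $\bar T$, namely the common image line of the maps $\theta-\one$ with $\theta\in\bar T$, that it lies in $\bar W$, and that the pair $(\bar W,b)$ determines $\bar T$ uniquely. This is exactly Proposition~\ref{cor_1} made equivariant, and the only delicate point is transporting its uniqueness from the standard choice of $W$ to an arbitrary $(n-2)$-dimensional subspace. Alternatively, one can avoid conjugation and build $\Sigma$ directly: complete $\Span{b}$ to a maximal flag of $\bar W$ and then to a maximal flag $\Set{0}=V_0<\cdots<V_{n-2}=\bar W<V_{n-1}<V_n=V$ of $V$. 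The associated Sylow subgroup $\Sigma$ (Remark~\ref{rem:flag}) normalises $\bar T$ because $T\leq\Sigma$ does so by Remark~\ref{rem_TinAGLp}, while its unitriangular part fixes $b$ (as $\Span{b}=V_1$) and stabilises $\bar W=V_{n-2}$, so that the computation in the proof of Theorem~\ref{prop_Syl} sends each generator of $\bar T$ back into $\bar T$, possibly permuting the three non-trivial cosets of $\bar W$.
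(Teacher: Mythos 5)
Your proof is correct and takes essentially the same route as the paper's: the paper also proves this by conjugating the standard configuration $(S,T_S)$ by a linear map that carries the pair $\left(\Span{e_i \mid 3\leq i \leq n},\, e_n\right)$ to $(\bar W, b)$, where $b$ is the parameter supplied by Proposition~\ref{cor_1}, and then invoking normality and uniqueness from Theorem~\ref{prop_Syl}. The only difference is presentational: the paper compresses your careful first step (the basis-free invariant $\Span{b}=\mathrm{Im}(A_\theta - \one)$ and the transport of uniqueness to an arbitrary $\bar W$) into the phrase ``up to conjugation'' together with the observation that $x\pi + x + 0\pi$ is always a scalar multiple of $b$, so your write-up, including the alternative flag-based construction of $\Sigma$, is a more explicit rendering of the same argument.
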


\begin{proof}
Up to conjugation we may assume that $T \cap \bar{T} = \Span{e_i \mid  3 \leq i \leq n}$.
By Proposition~\ref{cor_1}, there exists $b\in W$ such that $\bar{T}=T_{b}$, where $W<V$ is the subspace defined by $\sigma_W=\bar{T}\cap T$. Choose a basis  $\Set{\bar e_1,\ldots, \bar{e}_n}$ of $V$ such that $\bar{e}_n=b$ and $W=\Span{\bar{e}_3,\ldots, \bar{e}_n}$. Let $L\in GL(V)$ be the linear map $L\colon e_i \mapsto \bar{e}_i$.  Clearly $\bar{T}=T_S^L$ is normal in $\Sigma\deq S^L$.
\end{proof}

We prove in Theorem~\ref{thm_314} that if a Sylow $2$-subgroup $\Sigma$ of $\AGL(V)$ contains a conjugate in $\Sym(V)$ of $T$ as a normal subgroup, then such a subgroup is either $T$ or $T_\Sigma$, where $T_\Sigma$ is as in Theorem~\ref{prop_Syl}. In order to do that, the following result is helpful.

\begin{theorem}\label{thm_314_1}
Let $\Sigma$ be a Sylow $2$-subgroup of $\AGL(V)$ and $\Set{0} = V_0 < V_1 < \ldots < V_n = V$ be the associated invariant flag as in Remark~\ref{rem:flag}. There exist $2^{d\binom{n-d}{2}}$ subgroups $T^g\le \Sigma$, where $g\in\Sym(V)$,  such that $ \sigma_{V_d} \leq T\cap T^g$ and $T\le N_{\Sym(V)}(T^g) = \AGL(V)^g$.
\end{theorem}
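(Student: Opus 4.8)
The plan is to parametrise the admissible subgroups by their associated algebra structure and then reduce the count to choosing a symmetric bilinear form with prescribed support. First I would use the dictionary of~\cite{cds06}: since $T^g\le\Sigma\le\AGL(V)$ is elementary abelian and regular, writing $\tau_v$ for the element of $T^g$ with $0\tau_v=v$ gives $\tau_v=(\one_n+M_v)\sigma_v$ with $M_v\in\GL(V)$, and regularity together with commutativity forces $v\mapsto M_v$ to be additive with $M_uM_v=M_{uM_v}$. Hence $u\ast v\deq uM_v$ makes $(V,+,\ast)$ a commutative associative $\FF$-algebra with $v\ast v=0$, and $T^g$ is recovered as $\Set{(\one_n+M_v)\sigma_v\mid v\in V}$. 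In this language $T\cap T^g=\sigma_{A}$, where $A\deq\operatorname{Ann}(V)=\Set{a\in V\mid a\ast V=0}$, because $\tau_a=\sigma_a$ exactly when $M_a=0$.

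Next I would translate the three requirements. Computing as in the proof of Theorem~\ref{prop_Syl}, $\sigma_w^{-1}\tau_v\sigma_w$ is the affine map $u\mapsto u+u\ast v+(v+w\ast v)$, so $[\sigma_w,\tau_v]=\sigma_{w\ast v}$ and $T\le N_{\Sym(V)}(T^g)$ holds precisely when $w\ast v\in A$ for all $w,v$, i.e.\ $V^2\subseteq A$ (equivalently $V^3=0$). The requirement $\sigma_{V_d}\le T\cap T^g$ reads $V_d\subseteq A$. Finally, $T^g\le\Sigma$ is equivalent to every $M_v$ being strictly upper unitriangular for the flag of Remark~\ref{rem:flag}, that is $V_k\ast V\subseteq V_{k-1}$ for all $k$; from $[\sigma_w,\tau_v]=\sigma_{w\ast v}$ one also reads $[T,T^g]=\sigma_{V^2}$, so the condition $V^2\subseteq V_d$ is the same as $[T,T^g]\le\sigma_{V_d}$.

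I would then build the family consisting of those $T^g$ for which $\ast$ has image in $V_d$ and $V_d\subseteq A$; for such algebras $V^3=V^2\ast V\subseteq V_d\ast V=0$ is automatic, so all three conditions hold and in particular $\sigma_{V_d}\le T\cap T^g$. Such a product is exactly a symmetric bilinear map $\beta\colon V\times V\to V_d$ with $\beta(V_d,V)=0$ and $\beta(v,v)=0$; concretely $\tau_{e_i}=(\one_n+N_i)\sigma_{e_i}$, where the strictly-upper-triangular matrix $N_i$ has its $(j,k)$-entry equal to the coefficient of $e_k$ in $\beta(e_i,e_j)$, generalising the matrices $\pi_b,\ve_b$ of Proposition~\ref{cor_1}. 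Fixing the canonical basis, $\beta$ is determined freely and independently by the values $\beta(e_i,e_j)\in V_d$ for $1\le i<j\le n-d$ (the diagonal values vanish, and the values involving $V_d$ vanish by the annihilator condition); since $\dim V_d=d$ and there are $\binom{n-d}{2}$ such pairs, the structure constants range over $\FF^{\,d\binom{n-d}{2}}$, and distinct choices yield distinct subgroups. This produces the required $2^{d\binom{n-d}{2}}$ subgroups $T^g\le\Sigma$, each elementary abelian regular, normalised by $T$, and meeting $T$ in a subgroup containing $\sigma_{V_d}$.

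The step I expect to be most delicate is ensuring that membership in $\Sigma$ (flag-compatibility) and the annihilator condition interact so as to leave the $\binom{n-d}{2}$ values $\beta(e_i,e_j)$ free in the whole $d$-dimensional space $V_d$: one must check, using the precise indexing $V_i=\Span{e_{n-i+1},\ldots,e_n}$, that placing $\beta(e_i,e_j)$ in $V_d$ is compatible with (indeed forces) the strict-triangularity $V_k\ast V\subseteq V_{k-1}$ for $i<j\le n-d$, and that no further relations are imposed by associativity once $V^3=0$. Keeping the parametrisation $v\mapsto M_v$ linear throughout, so that the group axioms reduce to the algebra identities, is what makes the count a clean dimension count rather than a system of quadratic constraints.
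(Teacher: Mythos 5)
Your construction is, in substance, the same as the paper's: your bilinear map $\beta$ with values in $V_d$ and with $\beta(V_d,V)=0$ is exactly the paper's linear map $u\mapsto B_u$ (the $j$-th row of the paper's matrix $B_{e_i}$ is your $\beta(e_i,e_j)$), your symmetry and $\beta(v,v)=0$ requirements are the paper's conditions that the $i$-th row of $B_{e_j}$ equal the $j$-th row of $B_{e_i}$ and that the $i$-th row of $B_{e_i}$ vanish, and the count $2^{d\binom{n-d}{2}}$ is obtained by the same dimension count. Your verification that every member of this family satisfies the three required properties is correct, and the observation that flag-compatibility and $V^3=0$ come for free once $V^2\subseteq V_d\subseteq\operatorname{Ann}(V)$ is a genuine simplification of the paper's matrix computations.

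The gap is the converse, which is where the paper's proof spends all of its effort. The theorem asserts a \emph{count}, so one must also show that every $T^g\le\Sigma$ satisfying the two hypotheses lies in your family, i.e.\ satisfies $V^2\subseteq V_d$ (equivalently $[T,T^g]\le\sigma_{V_d}$). You state this equivalence but never derive $V^2\subseteq V_d$ from the hypotheses; you impose it as the definition of your family. The paper derives it: conjugating $\bar X_{(u,0)}$ by a translation $\sigma_{u'}$ produces a translation by $(u'+u'A_u,\,u'B_u)$ lying in $T\cap\bar T$, and from this it concludes $A_u=\one_{n-d}$, i.e.\ that the linear parts map $V$ into $V_d$. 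Note, however, that this forcing step uses $T\cap\bar T=\sigma_{V_d}$ (equality), not the stated containment $\sigma_{V_d}\le T\cap T^g$, and under the literal containment hypothesis the converse is actually false: for $n=4$, $d=1$, the algebra with $e_1\ast e_2=e_3$ and all other products of basis vectors zero yields an elementary abelian regular $T^g\le\Sigma$ which is normalised by $T$ and has $T\cap T^g=\sigma_{\Span{e_3,e_4}}\geq\sigma_{V_1}$, yet $V^2=\Span{e_3}\not\subseteq V_1$, so it lies outside your family (and outside the paper's parametrisation), breaking the claimed equality of cardinalities. So, as written, your argument proves only the lower bound ``there exist at least $2^{d\binom{n-d}{2}}$ such subgroups''; to match the paper you would need to add its forcing argument, which requires reading the intersection hypothesis as an equality, and even then the clean exact count is really a count of the family characterised by $[T,T^g]\le\sigma_{V_d}\le T\cap T^g$ --- which is precisely what both your construction and the paper's proof enumerate.
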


\begin{proof}
We shall use the canonical embedding of $\AGL(V)$ in $\GL\left(\FF^{\,n+1}\right)$ sending the affinity $\phi\colon x\mapsto xL+v$ into the linear map represented by the matrix
\[
\left (\begin{array}{c|c}
1 & v \\ \hline 0 & L
\end{array}\right).\] The action of the affinity $\phi$ can be recovered by the equality
\[
(1,x\phi) =(1,x)\left (\begin{array}{c|c}
1 & v \\ \hline 0 & L
\end{array}\right).
\]
Under this monomorphism the elements of $T$ are represented by the matrices in which $L=1_n$.
Assume now that $W=V_d$ is defined as in Remark~\ref{rem:flag} and that, with the same notation, $U=U_d\deq \Span{ v_1,\ldots, v_{n-d}}$ so that $V=U\oplus W$. Taking into account this decomposition, the matrices representing elements  of $\AGL(V)$ can be written in the block form 
\[ 
M\deq \left (\begin{array}{c|c|c}
1 & u & w \\ \hline 0 & A & B \\ \hline  0 & D & C
\end{array}\right)
\]
where $u\in U$ and $w\in W$ (each of them referred to the relevant bases).
Under this notation the elements of $\Sigma$ are represented by matrices of the form 
\[ 
X \deq \left (\begin{array}{c|c|c}
1 & u & w \\ \hline 0 & A & B \\ \hline  0 & 0 & C
\end{array}\right),\]
where  $A$ and $C$ are upper unitriangular matrices, i.e.\ $X$ is an upper unitriangular matrix of $\GL(V)$. Finally the matrices representing the elements $\sigma_z$ in $\sigma_W$, where $z\in W$,   are those of the form
\[
Y_z \deq \left (\begin{array}{c|c|c}
1 & 0 & z \\ \hline 0 & 1_{n-d} & 0 \\ \hline  0 & 0 & 1_d
\end{array}\right).
\]
Now we look for matrices $X$ commuting with $Y_z$ independently of the choice of $z$. The condition $XY_z=Y_zX$ gives $zC=z$ for all $z\in W$, so that $C=1_d$. 
We have shown that \[C_\Sigma(\sigma_W)=\Set{
\left (\begin{array}{c|c|c}
1 & u & w \\ \hline 0 & A & B \\ \hline  0 & 0 & 1_d
\end{array}\right)
\mid A \text{ upper unitriangular}, u\in U \text{ and } w\in W
}.\]
Next, we determine the transitive elementary abelian subgroups of $C_\Sigma(\sigma_W)$ containing $\sigma_W$. Let $\bar{T}$ be one of them, it is well known that $\bar{T}$ has to be a regular permutation group. Let 
\[
\bar X=\left (\begin{array}{c|c|c}
1 & u & w \\ \hline 0 & A & B \\ \hline  0 & 0 & 1_d
\end{array}\right) \in \bar{T}\setminus \sigma_W.
\]
Note that $\bar{X}$ represents an affinity sending $0$ to $(u,w)$. Up to the right multiplication by $Y_w$ we can restrict our attention to the case when $w=0$. In this case, the regularity implies that \[
\bar X_{(u,0)}\deq \left (\begin{array}{c|c|c}
1 & u & 0 \\ \hline 0 & A_u & B_u \\ \hline  0 & 0 & 1_d
\end{array}\right)\]
 is uniquely determined by the image $u$ of $0$ under the affinity represented by $\bar{X}_{(u,0)}$. This gives rise to an isomorphism from $U\oplus W$ onto $\bar{T}$ defined by \[
(u,w)\mapsto \bar{X}_{(u,w)} \deq \left (\begin{array}{c|c|c}
1 & u & w \\ \hline 0 & A_u & B_u \\ \hline  0 & 0 & 1_d
\end{array}\right)
\]  
where $A_{uA_{u'}+u'}=A_uA_{u'}$ and $B_{uA_{u'}+u'}=B_u+A_uB_{u'}$ for all $u,u'\in U$.
Since $\sigma_W$ is central in $\bar{T}$, the requirement that $\bar{T}$ is abelian is equivalent to the condition $\bar{X}_{(u,0)}\bar{X}_{(u',0)}=\bar{X}_{(u',0)}\bar{X}_{(u,0)}$ for all $u, u'\in U$. This in turn is equivalent to the following set of conditions
\begin{align}
u'+uA_{u'}&=u+u'A_{u} \label{eq:comm_A}\\
uB_{u'}&=u'B_{u} \label{eq:comm_B} \\
A_{u}A_{u'}&=A_{u'}A_{u}	 \label{eq:comm_3}\\
A_{u}B_{u'}+B_{u}&=A_{u'}B_{u}+B_{u'}  \label{eq:comm_4}
\end{align}
for all $u,u'\in U$. 
If we furthermore assume  that $\bar{T}$ is normalised by $T$ we have 
\begin{align*}
\bar{T} \ni \left (\begin{array}{c|c|c}
1 & u' & 0 \\ \hline 0 & 1_{n-d} & 0 \\ \hline  0 & 0 & 1_d
\end{array}\right)^{-1}
&\bar X_{(u,0)}
 \left (\begin{array}{c|c|c}
1 & u' & 0 \\ \hline 0 & 1_{n-d} & 0 \\ \hline  0 & 0 & 1_d
\end{array}\right) \\
 =\left (\begin{array}{c|c|c}
1 & u+u'+u'A_{u} & u'B_{u} \\ \hline 0 & A_{u} & B_{u} \\ \hline  0 & 0 & 1_d
\end{array}\right)
=
&\bar X_{(u,0)} \left (\begin{array}{c|c|c}
1 & u'+u'A_{u} & u'B_{u} \\ \hline 0 & 1_{n-d} & 0 \\ \hline  0 & 0 & 1_d
\end{array}\right).
\end{align*}
It follows that 
\[\left (\begin{array}{c|c|c}
1 & u'+u'A_{u} & u'B_{u} \\ \hline 0 & 1_{n-d} & 0 \\ \hline  0 & 0 & 1_d
\end{array}\right) \in T\cap \bar{T} = \sigma_W,
\]
so that $u'+u'A_{u} = 0$ for all $u\in U$ and $u'\in U$, i.e.\ $A_{u}=1_{n-d}$ for all $u\in U$.
As a consequence Eq.~\eqref{eq:comm_A}, \eqref{eq:comm_B}, \eqref{eq:comm_3}, and \eqref{eq:comm_4} reduce to Eq.~\eqref{eq:comm_B}, and the map $u\mapsto B_u$ is linear.
Finally, $\bar{T}$ is elementary abelian if and only if $\bar{X}_{(u,0)}^2=1_{n+1}$ for all $u\in U$. This final condition can be stated equivalently in the form 
\begin{equation}\label{eq:exp2}
uB_u=0 \text{ for all $u\in U$.}
\end{equation}
The group $\bar{T}$ is then uniquely determined by the linear map $u\mapsto B_u$, which in turn is defined once for all $1 \leq i \leq n-d$ the matrix $B_{e_i}$ is given. Such matrices have to satisfy Eq.~\eqref{eq:comm_B} and of Eq.~\eqref{eq:exp2}, i.e.\ the $i$-th row of $B_{e_j}$ is equal to the $j$-th row of $B_{e_i}$ and  the $i$-th row of $B_{e_i}$ is the zero row for $1\le i , j \le n-d$. Thus, the total number of possible choices for $\bar{T}$ is easily seen to be $2^{d\binom{n-d}{2}}$.
\end{proof}

\begin{theorem}\label{thm_314}
Let $g \in \Sym(V)$ and let $\Sigma$ be a Sylow $2$-subgroup of $\AGL(V)$ containing $T^g$. The subgroup $T^g$ is normal in $\Sigma$ if and only if $T^g \in \Set{T, T_\Sigma}$.
\end{theorem}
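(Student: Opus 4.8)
The plan is to prove the two implications separately, the forward one being immediate and all the work lying in the converse.

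For the implication $\Leftarrow$, note that $T\isnorm\AGL(V)\ge\Sigma$ gives $T\isnorm\Sigma$ at once, while $T_\Sigma\isnorm\Sigma$ is the content of Theorem~\ref{prop_Syl}. For $\Rightarrow$, suppose $T^g\isnorm\Sigma$. Since $T$ is normal in $\Sigma$ as well, the intersection $T\cap T^g$ is a normal subgroup of $\Sigma$ contained in $T$; by the uniseriality of $V$ as a $\mathcal U$-module (Remark~\ref{rem:flag}) it must be $\sigma_{V_d}$ for some $d$. If $d=n$ then $T^g=T$ and we are done, so assume $T^g\ne T$; then $d\le n-2$ by Proposition~\ref{six}. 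As $T$ and $T^g$ are both normal in $\Sigma$ they normalise each other, so $T\le N_{\Sym(V)}(T^g)$ and $T^g$ is one of the subgroups parametrised in Theorem~\ref{thm_314_1} with $W=V_d$: it is determined by a linear map $u\mapsto B_u$ (with the $A_u$ forced to be identities), and the requirement $T\cap T^g=\sigma_{V_d}$ exactly says that this map is injective, i.e.\ that the associated alternating form $\beta(u',u)=u'B_u$ on $V/V_d$ is non-degenerate.

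The remaining hypothesis, normality under the unitriangular part $\mathcal U$ of $\Sigma$, is what I would exploit next. Conjugating the generators $\bar X_{(u,0)}$ of $T^g$ by the Chevalley generators $1_n+E_{i,i+1}$ of $\mathcal U$ and imposing that the images again lie in $T^g$ produces explicit linear constraints on $u\mapsto B_u$, which split according to the type of generator. The generators with $i>n-d$, which act inside $W$, force every $B_u$ to have image in the socle $V_1$; equivalently $[T,T^g]=\sigma_{V_1}$ and $\beta$ is scalar-valued. The generators with $i<n-d$, which act inside the chosen complement of $W$, give intertwining relations of the form $B_{uX}=XB_u$; since over $\FF$ these generators are involutions ($X^{-1}=X$), each such relation is exactly the statement that $X$ is an isometry of $\beta$, so the whole image $\mathcal U_{n-d}$ of $\mathcal U$ acting on $V/V_d$ preserves $\beta$.

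At this point $\beta$ is a non-degenerate $\mathcal U_{n-d}$-invariant alternating form on the $(n-d)$-dimensional natural module $V/V_d$. Unwinding the isometry constraints shows that the Gram matrix of $\beta$ is supported on a single $2\times2$ corner, hence singular as soon as $n-d\ge 3$; together with the fact that a non-degenerate alternating form needs even dimension, this forces $n-d=2$. Conceptually this is the impossibility of the embedding $\mathcal U_{n-d}\le\mathrm{Sp}_{n-d}(\FF)$, which fails on orders once $n-d\ge 4$. Thus $T^g$ intersects $T$ in a second-maximal subgroup and is normal in $\Sigma$, so $T^g=T_\Sigma$ by the uniqueness in Theorem~\ref{prop_Syl}. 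The main obstacle I expect is the conjugation bookkeeping of the previous paragraph together with the recognition that the two families of constraints amount to \emph{image in the socle} plus \emph{symplectic invariance}; the one genuinely delicate point is the $\FF$-specific observation that the unitriangular generators are involutions, which turns the a priori intertwining relation into an honest isometry condition and thereby pins $n-d$ down to $2$.
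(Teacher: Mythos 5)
Your proof is correct and follows essentially the same route as the paper's: both reduce to the parametrisation in (the proof of) Theorem~\ref{thm_314_1} (all $A_u=\one_{n-d}$, so $T^g$ is determined by the linear map $u\mapsto B_u$), both extract the decisive constraints by conjugating with the elementary block matrices of $\Sigma$, and both conclude $d=n-2$ by combining these constraints with Proposition~\ref{six} before invoking the uniqueness statement of Theorem~\ref{prop_Syl}. Your symplectic packaging --- reading the $W$-block constraints as ``image of every $B_u$ lies in the socle'' and the $U$-block constraints as invariance of the alternating form $\beta(u',u)=u'B_u$, so that non-degeneracy (equivalently, the exactness of $T\cap T^g=\sigma_{V_d}$) fails once $n-d\ge 3$ --- is a conceptual restatement of the paper's row-and-column vanishing computations for the matrices $B_{e_h}$, not a genuinely different argument.
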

\begin{proof}
We shall use the same notation as in the proof of the previous theorem. If $T^g$ is normal in $\Sigma$ then $T\cap T^g=\sigma_{V_d}$ for some $1\le d\le n$, $d\ne n-1$.  Notice that $T^g < \Sigma$, so $T$ normalises ${T}^g$. Thus
\[
T^g=\bar{T}= \Set{ \left(\begin{array}{c|c|c}
1 & u & w \\ \hline 0 & 1_{n-d} & B_u \\ \hline  0 & 0 & 1_d
\end{array}\right) \mid u\in U }\]
is determined by the linear map $u\mapsto B_u$. Consider the matrices 
\[
M_{i,j}\deq \left (\begin{array}{c|c|c}
1 & 0 & 0 \\ \hline 0 & 1_{n-d} & 0 \\ \hline  0 & 0 & 1_d +E_{i,j}
\end{array}\right),
\] where $E_{i,j}$ is defined in the beginning of Sec.~\ref{sec_syl}. These matrices represent elements of $\Sigma$ such that $M_{i,j}=M_{i,j}^{-1}$. A direct computation shows that 
\begin{gather*}
M_{i,j}^{-1}\left(\begin{array}{c|c|c}
1 & u & w \\ \hline 0 & 1_{n-d} & B_u \\ \hline  0 & 0 & 1_d
\end{array}\right)M_{i,j}= 
\left(\begin{array}{c|c|c}
1 & u & w(1_{d}+E_{i,j}) \\ \hline 0 & 1_{n-d} & B_u(1_{d}+E_{i,j}) \\ \hline  0 & 0 & 1_d
\end{array}\right)
\end{gather*}
so that if $\bar{T}$ is normal in $\Sigma$ then $B_u(1_{d}+E_{i,j}) = B_u$ for every $u\in U$ and $1 \le i < j \le d$. It follows that, for $1 \leq i \leq d-1$, the $i$-th column of $B_u$ is the zero column. The matrix 
\[N_{i,j} \deq \left (\begin{array}{c|c|c}
1 & 0 & 0 \\ \hline 0 & 1_{n-d}+E_{i,j} & 0 \\ \hline  0 & 0 & 1_d
\end{array}\right),
\] where $1\le i < j \le n-d$, represents an involution in $\Sigma$ and 
\begin{gather*}
N_{i,j}^{-1}\left(\begin{array}{c|c|c}
1 & e_h & w \\ \hline 0 & 1_{n-d} & B_{e_h} \\ \hline  0 & 0 & 1_d
\end{array}\right)N_{i,j}= 
\left(\begin{array}{c|c|c}
1 & e_h(1_{n-d}+E_{i,j}) & w \\ \hline 0 & 1_{n-d} & (1_{n-d}+E_{i,j})B_{e_h} \\ \hline  0 & 0 & 1_d
\end{array}\right).
\end{gather*}
If this matrix is in $\bar{T}$ and $h\ne i$, then $ e_h(1_{n-d}+E_{i,j}) = e_h$, so that $(1_{n-d}+E_{i,j})B_{e_h}=B_{e_h}$, which in turn implies that the $j$-th row of $B_{e_h}$ is the zero row for every $j\ne i$ and $j\ne h$. Hence $B_{e_h}=0$ for $3\leq h \leq n$, so $\Span{e_3,\ldots,e_n} \leq W$ and thus $d  = \dim(W)\geq n-2$. Moreover, by Proposition~\ref{six} we have $d \leq n-2$, and therefore $d=n-2$. We can now apply Theorem~\ref{prop_Syl} to obtain that $\bar{T}\isnorm \Sigma$ if and only if $T^g=\bar{T}\in \Set{T, T_\Sigma}$.
\end{proof}

With the same notation of the previous theorems we have the following corollary.
\begin{corollary}\label{cor:Tg=Tp}
Every $g\in N_{\Sym(V)}(\Sigma)\setminus\AGL(V)$ interchanges by conjugation $T$ and $T_\Sigma$. Moreover each element in $ \AGL(V)\cap  N_{\Sym(V)}(T_\Sigma)=\AGL(V)\cap  \AGL(V)^g$ stabilises this action.
\end{corollary}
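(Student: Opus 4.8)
The plan is to reduce everything to Theorem~\ref{thm_314} together with the identification $\AGL(V)=N_{\Sym(V)}(T)$. Fix $g\in N_{\Sym(V)}(\Sigma)\setminus\AGL(V)$. First I would record the two facts needed about the pair $\Set{T,T_\Sigma}$: since $T\isnorm\AGL(V)$ and $T_\Sigma\isnorm\Sigma$ (Theorem~\ref{prop_Syl}), while $\Sigma$ is a Sylow $2$-subgroup of $\AGL(V)$ and hence contains the normal $2$-subgroup $T$, both $T$ and $T_\Sigma$ lie in $\Sigma$ and are normal in $\Sigma$. Conjugating by $g$, which normalises $\Sigma$, gives $T^g\le\Sigma^g=\Sigma$ and $T^g\isnorm\Sigma$, and likewise $T_\Sigma^g\le\Sigma$ with $T_\Sigma^g\isnorm\Sigma$; each of $T^g,T_\Sigma^g$ is an elementary abelian regular subgroup, being conjugate to $T$.

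Next I would invoke Theorem~\ref{thm_314}: an elementary abelian regular subgroup contained in $\Sigma$ and normal in $\Sigma$ belongs to $\Set{T,T_\Sigma}$. Hence conjugation by $g$ maps the two-element set $\Set{T,T_\Sigma}$ into itself; being induced by an automorphism of $\Sym(V)$ it is injective, so it permutes this set. Since $g\notin\AGL(V)=N_{\Sym(V)}(T)$ we have $T^g\ne T$, forcing $T^g=T_\Sigma$; as the only fixed-point-free permutation of a two-element set is the transposition, it follows that $T_\Sigma^g=T$. This proves the first assertion, that $g$ interchanges $T$ and $T_\Sigma$ by conjugation.

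For the ``Moreover'' part I would use the standard identity $N_{\Sym(V)}(T)^g=N_{\Sym(V)}(T^g)$. Combined with $\AGL(V)=N_{\Sym(V)}(T)$ and the equality $T^g=T_\Sigma$ just obtained, this gives $\AGL(V)^g=N_{\Sym(V)}(T_\Sigma)$, whence intersecting with $\AGL(V)$ yields the stated equality $\AGL(V)\cap N_{\Sym(V)}(T_\Sigma)=\AGL(V)\cap\AGL(V)^g$. Finally, any $h$ in this intersection satisfies $T^h=T$ (because $h\in\AGL(V)=N_{\Sym(V)}(T)$) and $T_\Sigma^h=T_\Sigma$ (because $h\in N_{\Sym(V)}(T_\Sigma)$), so conjugation by $h$ fixes each member of $\Set{T,T_\Sigma}$ rather than interchanging them, i.e.\ it stabilises this action.

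I expect no serious obstacle: the corollary is essentially a bookkeeping consequence of Theorem~\ref{thm_314}. The one point requiring a little care is the verification, \emph{before} applying that theorem, that $T^g$ and $T_\Sigma^g$ genuinely satisfy its hypotheses, namely that they are regular subgroups lying inside $\Sigma$ and normal in $\Sigma$; this is precisely where the assumption $g\in N_{\Sym(V)}(\Sigma)$ is used, and it is what turns conjugation by $g$ into a well-defined permutation of the pair $\Set{T,T_\Sigma}$.
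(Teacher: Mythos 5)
Your proposal is correct and follows essentially the same route as the paper: the paper's (very terse) proof likewise observes that $N_{\Sym(V)}(\Sigma)$ permutes the normal elementary abelian regular subgroups of $\Sigma$ --- which Theorem~\ref{thm_314} identifies as exactly $\Set{T, T_\Sigma}$ --- and that $\AGL(V)=N_{\Sym(V)}(T)$ forces $T^g\neq T$, whence the interchange. Your write-up simply makes explicit the bookkeeping (normality of $T^g$ and $T_\Sigma^g$ in $\Sigma$, the identity $N_{\Sym(V)}(T)^g=N_{\Sym(V)}(T^g)$) that the paper leaves to the reader.
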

\begin{proof}
Since the normaliser $N_{\Sym(V)}(\Sigma)$ permutes the normal elementary abelian regular subgroups of $\Sigma$ and $\AGL(V)$ normalises $T$, the claim follows straightforwardly.
\end{proof}

From Theorem~\ref{thm_314} we can also conclude that second-maximal intersection subgroups are characterised by the fact that  their normalisers contain a Sylow $2$-subgroup of $\AGL(V)$.
\begin{corollary}
Let $g \in \Sym(V)\setminus \AGL(V)$ such that $T^g$ is an elementary abelian regular subgroup of $\Sym(V)$. If $\Size{\AGL(V)}=2^{m}t$, with $t$ an odd integer, then
\[
\Size{T \cap T^g} = 2^{n-2} \iff 2^m \,\big{\mid} \Size{\AGL(V) \cap \AGL(V)^g}.
\] 
\end{corollary}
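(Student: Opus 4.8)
The plan is to prove both implications by tracking how a Sylow $2$-subgroup of $\AGL(V)$ sits inside the intersection $\AGL(V) \cap \AGL(V)^g$. The key observation is that $\AGL(V)^g = N_{\Sym(V)}(T^g)$ by Lemma~\ref{lem_Tnorm}, so $\AGL(V) \cap \AGL(V)^g$ is precisely the set of affinities (with respect to $+$) that also normalise $T^g$. The integer $2^m$ is the order of a Sylow $2$-subgroup $\Sigma$ of $\AGL(V)$, so the divisibility condition $2^m \mid \Size{\AGL(V)\cap\AGL(V)^g}$ is equivalent to saying that the subgroup $\AGL(V)\cap\AGL(V)^g$ of $\AGL(V)$ contains a full Sylow $2$-subgroup of $\AGL(V)$, i.e.\ some Sylow $2$-subgroup $\Sigma$ of $\AGL(V)$ normalises $T^g$.

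\textbf{The forward implication.}
First I would assume $\Size{T\cap T^g}=2^{n-2}$. By Theorem~\ref{thm_TinAGL} we have $T^g<\AGL(V)$, and then Proposition~\ref{rmk_TinS} furnishes a Sylow $2$-subgroup $\Sigma$ of $\AGL(V)$ with $T^g = T_\Sigma \isnorm \Sigma$. Thus $\Sigma \leq N_{\Sym(V)}(T^g) = \AGL(V)^g$, while trivially $\Sigma\leq\AGL(V)$; hence $\Sigma\leq\AGL(V)\cap\AGL(V)^g$. Since $\Sigma$ is a $2$-group of order $2^m$, it follows at once that $2^m \mid \Size{\AGL(V)\cap\AGL(V)^g}$, as desired.

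\textbf{The reverse implication.}
Conversely, suppose $2^m \mid \Size{\AGL(V)\cap\AGL(V)^g}$. The subgroup $H\deq\AGL(V)\cap\AGL(V)^g$ of $\AGL(V)$ then contains a subgroup of order $2^m$, which must be a Sylow $2$-subgroup $\Sigma$ of $\AGL(V)$ entirely contained in $H$, hence in $\AGL(V)^g=N_{\Sym(V)}(T^g)$. Therefore $\Sigma$ normalises $T^g$, and in particular $T^g\leq N_{\Sym(V)}(\Sigma)$. Since $\Sigma$ is a self-normalising Sylow $2$-subgroup in $\AGL(V)$ and $T^g$ is itself a $2$-group normalised by $\Sigma$, the product $T^g\Sigma$ is a $2$-subgroup of $\Sym(V)$ normalising $\Sigma$; but $T^g<\AGL(V)$ would force $T^g\leq\Sigma$ by the maximality of $\Sigma$ as a $2$-group of $\AGL(V)$. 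This places $T^g$ as an elementary abelian regular subgroup normal in a Sylow $2$-subgroup of $\AGL(V)$, so Theorem~\ref{thm_314} applies and gives $T^g\in\Set{T,T_\Sigma}$. As $g\notin\AGL(V)$ rules out $T^g=T$, we conclude $T^g=T_\Sigma$, whence $\Size{T\cap T^g}=2^{n-2}$ by the defining property of $T_\Sigma$ in Theorem~\ref{prop_Syl}.

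\textbf{Main obstacle.}
The delicate point is the reverse direction, specifically justifying that a $2$-subgroup of order $2^m$ inside $H$ really is a \emph{Sylow} $2$-subgroup of the ambient $\AGL(V)$ and that it normalises $T^g$; this is where one must carefully invoke that $2^m$ is exactly the full $2$-part of $\Size{\AGL(V)}$ together with $\AGL(V)^g=N_{\Sym(V)}(T^g)$. Once a genuine Sylow $2$-subgroup of $\AGL(V)$ normalising $T^g$ is produced, the structural classification of Theorem~\ref{thm_314} does the remaining work, and the forbidden index $d=n-1$ case is already excluded there, so no separate argument is needed.
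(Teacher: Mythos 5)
Your forward implication is correct and coincides with the paper's: Theorem~\ref{thm_TinAGL} puts $T^g$ inside $\AGL(V)$, Proposition~\ref{rmk_TinS} produces a Sylow $2$-subgroup $\Sigma$ with $T^g=T_\Sigma\isnorm\Sigma$, and then $\Sigma\leq\AGL(V)\cap\AGL(V)^g$ gives the divisibility. The reverse implication, however, contains a genuine gap. From $\Sigma\leq\AGL(V)\cap\AGL(V)^g$ you correctly conclude that $\Sigma$ normalises $T^g$, but the next two assertions do not follow: normalisation is not symmetric, so ``$T^g\leq N_{\Sym(V)}(\Sigma)$'' is a non-sequitur, and likewise the claim that $T^g\Sigma$ normalises $\Sigma$ is unjustified. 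More seriously, your decisive step is the conditional ``$T^g<\AGL(V)$ would force $T^g\leq\Sigma$'' --- and you never discharge the hypothesis $T^g\leq\AGL(V)$. You cannot get it from Theorem~\ref{thm_TinAGL}, because its hypothesis $\Size{T\cap T^g}=2^{n-2}$ is exactly the conclusion you are trying to reach. Without $T^g\leq\Sigma$ you cannot invoke Theorem~\ref{thm_314} at all, since that theorem requires $\Sigma$ to \emph{contain} $T^g$ as a normal subgroup, not merely to normalise it. Your closing paragraph locates the delicate point elsewhere (producing a Sylow $2$-subgroup of $\AGL(V)$ that normalises $T^g$, which you do handle correctly); the real obstruction is the containment $T^g\leq\Sigma$.

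The gap is exactly what the paper's one-line argument is designed to close: work inside $\AGL(V)^g$ instead of $\AGL(V)$. Since $\Sigma\leq\AGL(V)^g$ and $\Size{\AGL(V)^g}=\Size{\AGL(V)}=2^m t$, the group $\Sigma$ is simultaneously a Sylow $2$-subgroup of $\AGL(V)^g=N_{\Sym(V)}(T^g)$. Now $T^g$ is a \emph{normal} $2$-subgroup of $\AGL(V)^g$, and a normal $2$-subgroup lies in every Sylow $2$-subgroup; hence $T^g\leq\Sigma$. (Equivalently, your product argument works once carried out in the correct ambient group: $T^g\Sigma$ is a $2$-subgroup of $\AGL(V)^g$ containing the Sylow $2$-subgroup $\Sigma$, so $T^g\Sigma=\Sigma$.) Combined with $\Sigma\leq N_{\Sym(V)}(T^g)$ this gives $T^g\isnorm\Sigma$, and then Theorem~\ref{thm_314} together with $T^g\neq T$ (which follows from $g\notin\AGL(V)=N_{\Sym(V)}(T)$, as you observe) yields $T^g=T_\Sigma$ and $\Size{T\cap T^g}=2^{n-2}$. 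A final minor point: your appeal to $\Sigma$ being self-normalising in $\AGL(V)$ cites a result the paper proves only \emph{after} this corollary; it plays no actual role in your argument and should simply be deleted.
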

\begin{proof}
If $\Size{T \cap T^g} = 2^{n-2}$, then by Theorem~\ref{thm_TinAGL}, both $T$ and $T^g$ are subgroups of $\AGL(V)$. Moreover, since $T$ is contained in every Sylow $2$-subgroup of $\AGL(V)$, at least one of them, which we denote by $\Sigma$, contains both $T$ and $T^g$ as normal subgroups. Thus $\Sigma \leq \AGL(V) \cap \AGL(V)^g$. Conversely, if this is the case, $T$ and $T^g$, being contained in every Sylow $2$-subgroup of their own normalisers, are distinct normal subgroups of $\Sigma$. Therefore, Theorems~\ref{prop_Syl} and \ref{thm_314} yield
$\Set{T, T^g}=\Set{T, T_\Sigma}$, hence $\Size{T\cap T^g}=2^{n-2}$.
\end{proof}

It was already known to P. Hall (see e.g.~\cite{Carter1964}) that if $\Xi$ is a Sylow $2$-subgroup of $\Sym(V)$, then $N_{\Sym(V)}(\Xi)=\Xi$. In the remainder of the paper we establish a similar result for Sylow $2$-subgroups of $\AGL(V)$. 

\begin{theorem}\label{thm:N=S}
If  $\Sigma$ is a Sylow $2$-subgroup of $\AGL(V)$, then 
\[
[N_{\Sym(V)}(\Sigma):\Sigma]=2.
\]
\end{theorem}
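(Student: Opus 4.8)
The plan is to study the conjugation action of $N\deq N_{\Sym(V)}(\Sigma)$ on the two-element set $\Set{T,T_\Sigma}$, where $T_\Sigma$ is the unique normal second-maximal-intersection subgroup of $\Sigma$ provided by Theorem~\ref{prop_Syl}. First I would verify that this action is well defined: if $g\in N$ then $T^g\le\Sigma^g=\Sigma$ is an elementary abelian regular subgroup, conjugate to $T$ in $\Sym(V)$ and normal in $\Sigma$, so by Theorem~\ref{thm_314} one has $T^g\in\Set{T,T_\Sigma}$, and likewise $(T_\Sigma)^g\in\Set{T,T_\Sigma}$; since $g$ is a bijection it permutes $\Set{T,T_\Sigma}$. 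This produces a homomorphism $\rho\colon N\to\Sym\left(\Set{T,T_\Sigma}\right)\cong C_2$, and the theorem will follow once I show that $\ker\rho=\Sigma$ and that $\rho$ is surjective.

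For the kernel, an element $g\in N$ lies in $\ker\rho$ exactly when $T^g=T$, i.e.\ when $g\in N_{\Sym(V)}(T)=\AGL(V)$; hence $\ker\rho=N\cap\AGL(V)=N_{\AGL(V)}(\Sigma)$ (an element fixing $T$ automatically fixes $T_\Sigma$, since it then lies in $\Sigma$, which normalises $T_\Sigma$). So it suffices to prove that $\Sigma$ is self-normalising in $\AGL(V)$. Reducing modulo the normal subgroup $T\le\Sigma$ identifies $N_{\AGL(V)}(\Sigma)/T$ with $N_{\GL(V)}(\mathcal U)$, and since the ground field is $\FF$ the Borel subgroup of upper triangular invertible matrices coincides with $\mathcal U$ (all diagonal entries are forced to equal $1$); thus $N_{\GL(V)}(\mathcal U)=\mathcal U$ and $N_{\AGL(V)}(\Sigma)=T\mathcal U=\Sigma$. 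This gives $\ker\rho=\Sigma$, and in particular $[N:\Sigma]\le 2$.

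The surjectivity of $\rho$ --- equivalently, the existence of an element of $N$ interchanging $T$ and $T_\Sigma$ --- is the main point, and I expect it to be the genuine obstacle, since no affine map can do this (every element of $\AGL(V)$ fixes $T$ by conjugation, so it cannot swap $T$ and $T_\Sigma$). I would argue via Sylow's theorem inside a conjugate copy of $\AGL(V)$. By Dixon's theorem choose $h\in\Sym(V)$ with $T^h=T_\Sigma$; then $\AGL(V)^h=N_{\Sym(V)}(T_\Sigma)$ contains $\Sigma$, because $T_\Sigma\isnorm\Sigma$ by Theorem~\ref{prop_Syl}, and since $\Size{\AGL(V)^h}=\Size{\AGL(V)}$ the $2$-group $\Sigma$ is in fact a Sylow $2$-subgroup of $\AGL(V)^h$. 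As $\Sigma^h$ is another Sylow $2$-subgroup of $\AGL(V)^h$, Sylow's theorem yields $a\in\AGL(V)^h$ with $\Sigma^{ha}=\Sigma$. Setting $g\deq ha$ we obtain $g\in N$ and $T^g=(T^h)^a=(T_\Sigma)^a=T_\Sigma$, the last equality holding because $a$ normalises $T_\Sigma$; hence $\rho(g)\ne 1$.

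Combining the two parts, $\rho$ is surjective with kernel $\Sigma$, so $[N_{\Sym(V)}(\Sigma):\Sigma]=\Size{\operatorname{im}\rho}=2$, as claimed. As a by-product this reproves that $\Sigma$ is self-normalising in $\AGL(V)$, and Corollary~\ref{cor:Tg=Tp} already guarantees that the nontrivial element of $\operatorname{im}\rho$ acts precisely as the transposition interchanging $T$ and $T_\Sigma$.
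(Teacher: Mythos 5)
Your proof is correct, and it takes a route that differs from the paper's in instructive ways. Both arguments have the same core: Theorem~\ref{thm_314} forces every $g\in N_{\Sym(V)}(\Sigma)$ to permute the two-element set $\Set{T,T_\Sigma}$. From there the paper argues via the flag correspondence of Remark~\ref{rem:flag}: any element of $N_{\Sym(V)}(\Sigma)$ that fixes $T$ must stabilise the invariant flag and hence lie in $\Sigma$, which gives both $g^2\in\Sigma$ for all $g$ and the fact that any two elements of $N_{\Sym(V)}(\Sigma)\setminus\Sigma$ differ by an element of $\Sigma$; this yields \emph{at most} one non-trivial coset, while the existence of such a coset (i.e.\ $N_{\Sym(V)}(\Sigma)\neq\Sigma$) is left tacit in the paper's proof --- it follows from the standard fact that a $2$-subgroup which is not Sylow in $\Sym(V)$ grows inside its normaliser, a fact the paper invokes explicitly only in the corollary that follows. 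Your proof makes both bounds explicit: the upper bound via $\ker\rho=N_{\AGL(V)}(\Sigma)=\Sigma$, and the lower bound via the Dixon-plus-Sylow construction of $g=ha$ with $\Sigma^g=\Sigma$ and $T^g=T_\Sigma$. That surjectivity argument is genuinely new relative to the paper's proof, settles cleanly the point the paper glosses over, and produces exactly the element whose existence Corollary~\ref{cor:Tg=Tp} presupposes; you were also right not to quote the paper's later result $N_{\AGL(V)}(\Sigma)=\Sigma$ for the kernel, since in the paper that is deduced \emph{from} Theorem~\ref{thm:N=S} and would be circular. One small repair: your justification of $N_{\GL(V)}(\mathcal U)=\mathcal U$ only explains why the group of upper triangular invertible matrices coincides with $\mathcal U$ over $\FF$, not why $N_{\GL(V)}(\mathcal U)$ is contained in that group. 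This inclusion is standard, and Remark~\ref{rem:flag} supplies it in one line: $V$ is uniserial as a $\mathcal U$-module, so any element normalising $\mathcal U$ permutes, hence fixes, the members of the unique maximal flag of $\mathcal U$-submodules, and is therefore upper triangular.
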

\begin{proof}
By Remark~\ref{rem:flag} there exists a flag $\Set{0}= V_0 < V_1
< \ldots < V_n=V$ such that $\Sigma$ is the stabiliser by conjugation of $\sigma_{V_0}<\sigma_{V_1}<\ldots<\sigma_{V_n}=T$.
By Theorem~\ref{thm_314}, for every $g\in N_{\Sym(V)}(\Sigma)$, we have $\sigma_{V_i}^{g^2}=\sigma_{V_i}$ for $1 \leq i \leq n$, and so $g^2\in \Sigma$. 
Hence $N_{\Sym(V)}(\Sigma)/\Sigma$ has exponent equal to 2, i.e.\ it is an elementary abelian group. Let now $n,m\in N_{\Sym(V)}(\Sigma)\setminus \Sigma$. We have $T^{\,nm^{-1}}=T$, and so $nm^{-1}\in \Sigma$. Therefore $N_{\Sym(V)}(\Sigma)/\Sigma$ contains only one non-trivial coset, and consequently it has order 2.
\end{proof}

\begin{remark}
If $\dim (V)=n>2$, then $\AGL(V)$ is a subgroup of $\Alt(V)$, the alternating group on $V$. 
Indeed, we note that a subgroup $H$ of the symmetric group is contained in the alternating group if and only if one of its Sylow $2$-subgroups is. The reason is that all the elements of odd order are contained in the alternating subgroup, and any other element is the product of a $2$-element by an element of odd order. 
In our case, all the elements of $T$ are the product of an even number of transpositions, so that $T<\Alt(V)$. Hence it suffices to show that any Sylow $2$-subgroup of $\GL(V)$  is contained in $ \Alt(V)$. We recall that the matrices $1_{n}+E_{i,i+1}$, defined in the beginning of Sec.~\ref{sec_syl}, generate a Sylow $2$-subgroup of $\GL(V)$. Each of such matrices fixes $2^{n-1}$ vectors, whereas it pairwise exchanges the remaining $2^n - 2^{n-1}$ ones, i.e., as a permutation, it is the product of  $2^{n-1}-2^{n-2}$ of transpositions, which is an even number. 
\end{remark}

\begin{corollary}
The normaliser in $\Sym(V)$ of each Sylow $2$-subgroup $\Sigma$  of $\AGL(V)$ consists of even permutations. Thus
$N_{\Sym(V)}(\Sigma)=N_{\mathrm{\Alt(V)}}(\Sigma)$.
\end{corollary}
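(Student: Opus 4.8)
The plan is to reduce the statement, via Theorem~\ref{thm:N=S} and the preceding remark, to a single sign computation. Write $N\deq N_{\Sym(V)}(\Sigma)$. By the preceding remark one has $\Sigma\le\Alt(V)$, and by Theorem~\ref{thm:N=S} one has $[N:\Sigma]=2$. Since every element of $\Sigma$ is an even permutation, the sign is constant on each coset of $\Sigma$ in $N$; in particular the non-trivial coset $N\setminus\Sigma$ consists either entirely of even or entirely of odd permutations. It therefore suffices to exhibit one element $g\in N\setminus\Sigma$ and to verify that $g$ is even: this at once gives $N\subseteq\Alt(V)$, whence $N_{\Sym(V)}(\Sigma)=N_{\Alt(V)}(\Sigma)$, since the right-hand side is $N\cap\Alt(V)$.

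To build such a $g$ I would fix the explicit Sylow subgroup $\Sigma=S$ of Eq.~\eqref{eq_syl}, so that $T_\Sigma=T_{e_n}$ by Theorem~\ref{prop_Syl} and Proposition~\ref{cor_1}. By Corollary~\ref{cor:Tg=Tp} the coset $N\setminus\Sigma$ is exactly the set of permutations interchanging $T$ and $T_\Sigma$ by conjugation, so it is natural to look for an involution realising this swap on the generators $\pi_{e_n},\ve_{e_n}$. The candidate is
\[
g\colon x\mapsto x+x^{(1)}x^{(2)}e_n ,
\]
which is an involution with $\sigma_{e_1}^{\,g}=\pi_{e_n}$, $\sigma_{e_2}^{\,g}=\ve_{e_n}$ and $\sigma_{e_i}^{\,g}=\sigma_{e_i}$ for $3\le i\le n$, so that $T^g=T_\Sigma$. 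As $g$ is visibly non-affine, it lies outside $\AGL(V)\supseteq\Sigma$, hence $g\notin\Sigma$.

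Next I would check that $g$ normalises $\Sigma$ and then read off its sign. For the first point it is enough to conjugate the generators of $\Sigma$: the translations $\sigma_{e_i}$ land in $T_\Sigma\le\Sigma$ by the identities above, while a short computation gives $(\one_n+E_{i,i+1})^g=\one_n+E_{i,i+1}$ for $2\le i\le n-1$ and $(\one_n+E_{1,2})^g=\one_n+E_{1,2}+E_{1,n}$, all of which are upper unitriangular and so lie in $\Sigma$; since $g$ is an involution this forces $\Sigma^g=\Sigma$, so $g\in N\setminus\Sigma$. For the sign, observe that $g$ fixes every vector with $x^{(1)}x^{(2)}=0$ and pairs $x$ with $x+e_n$ on the $2^{n-2}$ vectors with $x^{(1)}=x^{(2)}=1$; thus $g$ is a product of $2^{n-3}$ transpositions, and is therefore even exactly when $2^{n-3}$ is even.

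The main obstacle is precisely this last parity step: the whole argument collapses to the assertion that the number $2^{n-3}$ of transpositions of $g$ is even, which is what makes $g$, and hence the entire coset $N\setminus\Sigma$, even. The construction of $g$ and the verification that it normalises $\Sigma$ are routine once the swap of $T$ and $T_\Sigma$ is read from Proposition~\ref{cor_1}; the only genuinely delicate point is this count, and I would take care to confirm the stated range $n>2$ against it (for instance with MAGMA, as the authors do elsewhere), since it is exactly here that the small-dimensional, characteristic-two arithmetic of the flag stabiliser enters.
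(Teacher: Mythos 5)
Your proof is correct for $n\ge 4$, but it takes a genuinely different route from the paper's. The paper argues abstractly: since $\Sigma\le\Alt(V)$ by the preceding remark, $\Sigma$ lies in a Sylow $2$-subgroup $\Xi$ of $\Alt(V)$; claiming $\Xi>\Sigma$, the normaliser-growth property of $2$-groups gives $N_{\Xi}(\Sigma)>\Sigma$, and Theorem~\ref{thm:N=S} then forces $N_{\Sym(V)}(\Sigma)=N_{\Xi}(\Sigma)\le\Alt(V)$. You instead produce an explicit representative $g\colon x\mapsto x+x^{(1)}x^{(2)}e_n$ of the unique non-trivial coset of $\Sigma$ in $N_{\Sym(V)}(\Sigma)$ and read off its sign; your verifications that $T^g=T_{e_n}=T_\Sigma$, that $g$ normalises $S$, and that $g$ is a product of $2^{n-3}$ disjoint transpositions are all accurate. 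The paper's argument buys brevity and freedom from computation; yours buys an explicit witness for the outer coset (a useful object in its own right, e.g.\ for Corollary~\ref{cor:Tg=Tp}) at the price of the matrix checks. One small point you should make explicit: the corollary concerns an arbitrary $\Sigma$, so note that all Sylow $2$-subgroups are conjugate in $\AGL(V)$ and that conjugation preserves parity, which is why working with the representative $S$ of Eq.~\eqref{eq_syl} suffices.

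Your closing worry about the range $n>2$ is not excess caution: it is exactly on target, and your method exposes a genuine defect in the statement itself. For $n=3$ your $g$ is a single transposition (it swaps $(1,1,0)$ and $(1,1,1)$ and fixes everything else), hence odd; combined with your reduction step (the sign is constant on cosets of $\Sigma$), this shows that for $n=3$ the entire coset $N_{\Sym(V)}(\Sigma)\setminus\Sigma$ consists of odd permutations, so the corollary is false in that case. This is consistent with the group orders: for $n=3$ the $2$-part of $\Size{\AGL(V)}$ and the $2$-part of $\Size{\Alt(V)}$ coincide (both equal $2^6$), so $\Sigma$ is already a Sylow $2$-subgroup of $\Alt(V)$ and the paper's strict inclusion $\Xi>\Sigma$ fails; moreover $\Sigma$ has index $2$ in, hence is normal in, a Sylow $2$-subgroup $P$ of $\Sym(V)$, so $N_{\Sym(V)}(\Sigma)\supseteq P$, and $P$ contains odd permutations. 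In general the paper's proof needs the $2$-part inequality $2^n-2>n(n+1)/2$, i.e.\ $n\ge 4$, exactly the same threshold your parity count produces. So both arguments prove the corollary only for $n\ge 4$; yours has the merit of making the exceptional case $n=3$ visible, whereas the paper's conceals it in the unjustified claim $\Xi>\Sigma$.
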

\begin{proof}
Let $\Sigma$ be a Sylow $2$-subgroup of $\AGL(V)$. If $\Xi$ is a Sylow $2$-subgroup of $\Alt(V)$ containing $\Sigma$, then $\Xi>\Sigma$, so that $N_{\Xi}(\Sigma)>\Sigma$. By Theorem~\ref{thm:N=S}, it follows that $N_{\Alt(V)}(\Sigma)\geq N_{\Xi}(\Sigma)=N_{\Sym(V)}(\Sigma)$.
\end{proof}
 The result which follows is the counterpart in $\AGL(V)$ of the result due to P. Hall on the Sylow $2$-subgroups of $\Sym(V)$. It also allows us to count the number of distinct Sylow $2$-subgroups of $\AGL(V)$.
\begin{theorem}
If $\Sigma$ is Sylow $2$-subgroup of $\AGL(V)$, then $N_{\AGL(V)}(\Sigma)=\Sigma$. In particular, 
\begin{equation}\label{eq:numberS}
[\AGL(V):\Sigma]=\prod_{j=0}^{n-1}{\left(2^{n-j}-1\right)}.
\end{equation}
is the number of distinct Sylow $2$-subgroups of $\AGL(V)$.
\end{theorem}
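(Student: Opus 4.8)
The plan is to deduce self-normalisation inside $\AGL(V)$ from the already-established index-two result in $\Sym(V)$, and then to read off the count from Sylow theory together with an elementary order computation.

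First I would note that $N_{\AGL(V)}(\Sigma)=N_{\Sym(V)}(\Sigma)\cap\AGL(V)$ and recall from Theorem~\ref{thm:N=S} that $[N_{\Sym(V)}(\Sigma):\Sigma]=2$, so that $N_{\Sym(V)}(\Sigma)=\Sigma\cup\Sigma g$ for some $g\notin\Sigma$. The crux is to verify that the non-trivial coset $\Sigma g$ is disjoint from $\AGL(V)$. Here I would invoke Corollary~\ref{cor:Tg=Tp}: any element of $N_{\Sym(V)}(\Sigma)\setminus\AGL(V)$ interchanges $T$ and $T_\Sigma$ by conjugation, whereas every element of $\AGL(V)$ normalises $T$ (since $T\isnorm\AGL(V)$) and every element of $\Sigma$ also fixes $T_\Sigma$ (because $T_\Sigma\isnorm\Sigma$ by Theorem~\ref{prop_Syl}). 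Concretely, suppose $h\in\Sigma g\cap\AGL(V)$ and write $h=sg$ with $s\in\Sigma\le\AGL(V)$. Then $s$ fixes both $T$ and $T_\Sigma$ under conjugation while $g$ interchanges them, so $T^{h}=(T^{s})^{g}=T^{g}=T_\Sigma\ne T$; this contradicts the fact that $h\in\AGL(V)$ must normalise $T$. Hence $\Sigma g\cap\AGL(V)=\emptyset$ and $N_{\AGL(V)}(\Sigma)=\Sigma$.

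Next I would count. By Sylow theory the number of Sylow $2$-subgroups of $\AGL(V)$ equals $[\AGL(V):N_{\AGL(V)}(\Sigma)]=[\AGL(V):\Sigma]$, using the first part. It then remains a routine order computation: since $\AGL(V)=T\rtimes\GL(V)$ we have $\Size{\AGL(V)}=2^{n}\Size{\GL(V)}$ with $\Size{\GL(V)}=\prod_{j=0}^{n-1}(2^{n}-2^{j})$, and since $\Sigma=\mathcal{U}T$ with $\mathcal U$ the group of upper unitriangular matrices we have $\Size{\Sigma}=2^{\binom{n}{2}}\cdot 2^{n}$. Factoring $\prod_{j=0}^{n-1}(2^{n}-2^{j})=2^{\binom{n}{2}}\prod_{j=0}^{n-1}(2^{n-j}-1)$, the powers of two cancel and the index collapses to $\prod_{j=0}^{n-1}(2^{n-j}-1)$, as claimed.

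I expect no serious obstacle, as both halves are short deductions from results proved above. The only delicate point is the coset argument of the first part, where one must use that every element of $\AGL(V)$ fixes $T$ itself (not merely some conjugate) in order to push the outer coset out of $\AGL(V)$; this is precisely what Corollary~\ref{cor:Tg=Tp} supplies. As a sanity check one may compare the count with Remark~\ref{rem:flag}, since $\prod_{j=0}^{n-1}(2^{n-j}-1)$ is exactly the number of complete flags of subspaces of $V$ over $\FF$.
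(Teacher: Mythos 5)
Your second half (the index computation and the appeal to Sylow theory to count the Sylow $2$-subgroups) is correct, and it makes explicit an order computation that the paper leaves implicit. The first half, however, contains a genuine gap: the application of Corollary~\ref{cor:Tg=Tp} to $g$ is circular. That corollary gives information only about elements already known to lie in $N_{\Sym(V)}(\Sigma)\setminus\AGL(V)$, whereas your $g$ was merely chosen in $N_{\Sym(V)}(\Sigma)\setminus\Sigma$. In your argument by contradiction you assume $h=sg\in\AGL(V)$ with $s\in\Sigma\le\AGL(V)$; but then $g=s^{-1}h\in\AGL(V)$, so the hypothesis of Corollary~\ref{cor:Tg=Tp} fails for $g$ and you cannot conclude that $T^{g}=T_\Sigma$. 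Indeed, under that very assumption $g$ normalises $T$, so $T^{h}=T$ and no contradiction appears. Put differently, the fact you need --- that the non-trivial coset of $\Sigma$ in $N_{\Sym(V)}(\Sigma)$ is disjoint from $\AGL(V)$ --- is exactly the statement $N_{\AGL(V)}(\Sigma)=\Sigma$ you are trying to prove, so it cannot be imported through a corollary whose hypothesis presupposes it.

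The gap is easy to close, but the repair bypasses Corollary~\ref{cor:Tg=Tp} entirely and is in fact the paper's own proof: write $\Size{\AGL(V)}=2^{m}t$ with $t$ odd, so that $\Size{\Sigma}=2^{m}$ and, by Theorem~\ref{thm:N=S}, $\Size{N_{\Sym(V)}(\Sigma)}=2^{m+1}$. Then $N_{\AGL(V)}(\Sigma)=N_{\Sym(V)}(\Sigma)\cap\AGL(V)$ has order dividing both $2^{m+1}$ and $2^{m}t$, hence dividing $2^{m}$; since it contains $\Sigma$, it equals $\Sigma$. (Equivalently: $N_{\AGL(V)}(\Sigma)$ is a $2$-subgroup of $\AGL(V)$ containing a Sylow $2$-subgroup, so Sylow maximality forces equality.) If you prefer a structural argument in the spirit of your attempt, you can instead argue as in the paper's proof of Theorem~\ref{thm:N=S}: any $h\in N_{\AGL(V)}(\Sigma)$ fixes $T$ by conjugation, hence permutes the normal subgroups of $\Sigma$ contained in $T$; by the uniseriality of Remark~\ref{rem:flag} these are precisely the members of the invariant flag, so $h$ stabilises the flag and therefore lies in $\Sigma$. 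With either repair in place of your coset argument, the remainder of your proof goes through unchanged.
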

\begin{proof}
By Theorem~\ref{thm:N=S}, if $\Size{\AGL(V)}=2^m t$, with $t$ an odd integer, then we have $\Size{\Sigma}=2^m$ and $\Size{N_{\Sym(V)}(\Sigma)}=2^{m+1}$. Since $N_{\AGL(V)}(\Sigma)\leq\AGL(V)$ and $N_{\AGL(V)}(\Sigma) \leq N_{\Sym(V)}(\Sigma)$, then $\Size{N_{\AGL(V)}(\Sigma)}=2^m$.
\end{proof}

Finally, we use Theorem~\ref{thm_314} to give an alternative proof of the following result. 
\begin{corollary}[\cite{Calderini2017}]
The group $\AGL(V)$  acts transitively by conjugation on the set of elementary abelian regular subgroups which intersect $T$ in a second-maximal subgroup of $T$.
\end{corollary}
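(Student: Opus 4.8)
The plan is to show that any two elementary abelian regular subgroups meeting $T$ in a second-maximal subgroup are $\AGL(V)$-conjugate, using the fact that $\AGL(V)$ already acts transitively on the relevant flags and that each Sylow $2$-subgroup pins down a unique such group. First I would invoke Theorem~\ref{thm_TinAGL} to guarantee that any such regular subgroup $\bar T$ is genuinely contained in $\AGL(V)$, so that we are working entirely inside $\AGL(V)$ and may use its Sylow theory. By Proposition~\ref{rmk_TinS}, given such a $\bar T$ there exists a Sylow $2$-subgroup $\Sigma$ of $\AGL(V)$ with $\bar T = T_\Sigma \isnorm \Sigma$; conversely Theorem~\ref{prop_Syl} says each Sylow $2$-subgroup $\Sigma$ contains exactly one such subgroup $T_\Sigma$ as a normal subgroup. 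This sets up a well-defined map $\Sigma \mapsto T_\Sigma$ from Sylow $2$-subgroups of $\AGL(V)$ to second-maximal-intersection subgroups.

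The core of the argument is then to transfer transitivity from Sylow subgroups to these regular subgroups. Let $\bar T_1$ and $\bar T_2$ be two second-maximal-intersection subgroups, and pick Sylow $2$-subgroups $\Sigma_1, \Sigma_2$ of $\AGL(V)$ with $\bar T_i = T_{\Sigma_i}$ as above. By Sylow's theorem, $\AGL(V)$ acts transitively by conjugation on its own Sylow $2$-subgroups, so there exists $h \in \AGL(V)$ with $\Sigma_1^{\,h} = \Sigma_2$. The key observation is that conjugation by $h$ must send the unique normal second-maximal-intersection subgroup of $\Sigma_1$ to the unique such subgroup of $\Sigma_2$: indeed $T_{\Sigma_1}^{\,h}$ is a normal elementary abelian regular subgroup of $\Sigma_2$ meeting $T^{\,h} = T$ (as $h\in\AGL(V)$ normalises $T$) in a second-maximal subgroup, so by the uniqueness clause of Theorem~\ref{prop_Syl} we must have $T_{\Sigma_1}^{\,h} = T_{\Sigma_2}$, i.e.\ $\bar T_1^{\,h} = \bar T_2$. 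This exhibits the conjugating element inside $\AGL(V)$, as required.

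The one point that needs care --- and which I expect to be the only real obstacle --- is verifying that conjugation by $h$ genuinely preserves the defining property, namely that $T_{\Sigma_1}^{\,h}$ still intersects $T$ (not merely $T^h$) in a second-maximal subgroup. This is exactly where $h \in \AGL(V)$ is used: since $\AGL(V)$ normalises $T$ we have $(T \cap T_{\Sigma_1})^{h} = T \cap T_{\Sigma_1}^{\,h}$, so the intersection index is preserved and $T_{\Sigma_1}^{\,h}$ lands in the hypotheses of Theorem~\ref{prop_Syl} relative to $\Sigma_2$. Once this is in place, the uniqueness in Theorem~\ref{prop_Syl} forces the equality and the transitivity follows immediately. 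The whole argument is short precisely because Theorems~\ref{prop_Syl} and~\ref{thm_314} have already done the structural work of identifying these subgroups canonically with Sylow $2$-subgroups.
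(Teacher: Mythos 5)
Your proof is correct and takes essentially the same route as the paper's: attach to each second-maximal-intersection subgroup a Sylow $2$-subgroup of $\AGL(V)$ in which it is normal (Proposition~\ref{rmk_TinS}), conjugate the Sylow subgroups by Sylow's theorem, and conclude by uniqueness of the normal second-maximal-intersection subgroup. The only cosmetic difference is that you invoke the uniqueness clause of Theorem~\ref{prop_Syl} (after checking that conjugation by $h\in\AGL(V)$ preserves the intersection with $T$), whereas the paper cites Theorem~\ref{thm_314}; the two play the identical role.
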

\begin{proof}
It is enough to show that each elementary abelian regular group $\bar T$ such that $\Size{T \cap \bar T} = 2^{n-2}$ is conjugated to $T_S$, where $S$ is the Sylow $2$-subgroup of $\AGL(V)$ defined as in Eq.~\eqref{eq_syl}. By Proposition~\ref{rmk_TinS} there exists a Sylow $2$-subgroup  $\bar S$ of $\AGL(V)$  such that $\bar T \isnorm \bar S$.  Moreover $\bar{S}^h = S$ for some $h\in \AGL(V)$. By Theorem~\ref{thm_314} it follows that $T_S=\bar{T}^h\isnorm \bar{S}^h$.
\end{proof}
As a last consequence, the number of Sylow $2$-subgroups of $\AGL(V)$ which contain the same second-maximal-intersection subgroup as a normal subgroup can be determined.
\begin{corollary}
The number $s_n$ of Sylow $2$-subgroups of $\AGL(V)$ which contain as a normal subgroup the same group $T^g$ such that $\dim(W)=n-2$, where $\sigma_W= T \cap T^g$ and $g \in \Sym(V)$, is given by the formula:
\[
s_n=3\prod_{j=3}^{n-1}{\left(2^{n-j}-1\right)}.
\]
\end{corollary}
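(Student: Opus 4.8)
The plan is a double count of the set of incidence pairs $(\Sigma,\bar{T})$, where $\Sigma$ ranges over the Sylow $2$-subgroups of $\AGL(V)$ and $\bar{T}$ ranges over the elementary abelian regular subgroups with $\sigma_W = T\cap\bar{T}$ and $\dim(W)=n-2$ that are normal in $\Sigma$. First I would record, as a consequence of Theorem~\ref{thm_314}, that a Sylow $2$-subgroup $\Sigma$ contains the fixed $T^g$ as a normal subgroup if and only if $T^g=T_\Sigma$: indeed $T^g\ne T$ because the intersection is second-maximal, so the dichotomy $T^g\in\Set{T,T_\Sigma}$ of Theorem~\ref{thm_314} forces $T^g=T_\Sigma$. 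Hence the sought number $s_n$ is exactly the number of Sylow $2$-subgroups $\Sigma$ whose associated subgroup $T_\Sigma$ from Theorem~\ref{prop_Syl} coincides with $T^g$, that is, the size of a single fibre of the map $\Sigma\mapsto T_\Sigma$.

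Next I would carry out the two counts. On the one hand, Theorem~\ref{prop_Syl} attaches to each $\Sigma$ exactly one admissible $\bar{T}$, namely $T_\Sigma$, so the number of incident pairs equals the number of Sylow $2$-subgroups of $\AGL(V)$, which is $\prod_{j=0}^{n-1}(2^{n-j}-1)$. On the other hand, there are $t_n$ admissible subgroups $\bar{T}$ by Corollary~\ref{cor_tn}. The decisive observation is that $\AGL(V)$ acts by conjugation on both families and that the assignment $\Sigma\mapsto T_\Sigma$ is equivariant: conjugating $\Sigma$ by $h\in\AGL(V)$ sends the unique normal second-maximal-intersection subgroup $T_\Sigma$ to $(T_\Sigma)^h$, which is again second-maximal-intersection because $h$ normalises $T$, and is normal in $\Sigma^h$, whence $T_{\Sigma^h}=(T_\Sigma)^h$ by the uniqueness in Theorem~\ref{prop_Syl}. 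Since $\AGL(V)$ acts transitively on the $t_n$ subgroups $\bar{T}$ by the transitivity result established immediately above, all fibres of $\Sigma\mapsto T_\Sigma$ have the same cardinality $s_n$.

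Combining the two evaluations of the incidence count gives $t_n\,s_n=\prod_{j=0}^{n-1}(2^{n-j}-1)$, and therefore
\[
s_n=\frac{\prod_{j=0}^{n-1}\left(2^{n-j}-1\right)}{t_n}
=\frac{3\prod_{k=1}^{n}\left(2^{k}-1\right)}{\left(2^{n-2}-1\right)\left(2^{n-1}-1\right)\left(2^{n}-1\right)}
=3\prod_{k=1}^{n-3}\left(2^{k}-1\right),
\]
which, after the reindexing $k=n-j$, is precisely $3\prod_{j=3}^{n-1}(2^{n-j}-1)$.

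The main obstacle is the justification that the fibres of $\Sigma\mapsto T_\Sigma$ are equinumerous; everything rests on the equivariance of this map together with the transitivity of $\AGL(V)$ on second-maximal-intersection subgroups, which jointly force any two fibres into bijection via a suitable conjugation. The remaining work---checking that the three leading factors of $\prod_{j=0}^{n-1}(2^{n-j}-1)$ cancel against the denominator of $t_n$ and performing the reindexing---is purely arithmetical.
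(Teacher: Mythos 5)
Your proof is correct and takes essentially the same approach as the paper: the paper's own (much terser) proof simply asserts $s_n = t/t_n$, where $t=\prod_{j=0}^{n-1}(2^{n-j}-1)$ is the number of Sylow $2$-subgroups and $t_n$ is as in Corollary~\ref{cor_tn}, which is precisely your double count of incidence pairs. Your explicit verification that the fibres of $\Sigma\mapsto T_\Sigma$ are equinumerous, via equivariance and the transitivity corollary proved just before, makes rigorous what the paper leaves implicit.
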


\begin{proof}
Let $\Size{\AGL(V)}=2^{m}t$, with $t$ an odd integer. First we recall that $t$ is the integer displayed in Eq.~\eqref{eq:numberS}. The claim follows from Corollary~\ref{cor_tn}, since $s_n={t}/{t_n}$, where $t_n$ is the number of elementary abelian regular subgroups in $\AGL(V)$ whose intersection with $T$ is a second-maximal subgroup of $T$.
\end{proof}


\section{Conclusion and open problems}\label{sec_concl}
We already mentioned that the conjugates in $\Sym(V)$ of $T$ are very important in the cryptanalysis of block ciphers. For this reason, a complete parametrisation of them
in terms of the size of their intersection with $T$ is needed. Recall that the elements of such intersections are in one-to-one correspondence with the weak keys corresponding to the alternative operations. 
In this paper, the aforementioned problem has been addressed, both considering subgroups of $\Sym(V)$ and $\AGL(V)$, in the case where the weak-key subspace has dimension $n-1$ and $n-2$. 
This last case turns out to be one of the most relevant for cryptanalysis, for  reasons whose description would lead us out of the scope of this work. 
We have computational evidence that also the case of lower dimensional weak-key spaces might be interesting from a cryptographic point of view, though it may require an entirely different  technical approach. 


\bibliographystyle{amsalpha}
\bibliography{T_Pallino_rev_1}

\end{document}